\def\input@path{{"/Users/russw/Documents/Research/mypapers/An algebraic groups perspective on Erdos-Ko-Rado/"}}
\numberwithin{equation}{section}
\numberwithin{figure}{section}
\theoremstyle{plain}
\newtheorem{thm}{\protect\theoremname}[section]
\theoremstyle{plain}
\newtheorem{lem}[thm]{\protect\lemmaname}
\theoremstyle{remark}
\newtheorem{rem}[thm]{\protect\remarkname}
\theoremstyle{plain}
\newtheorem{prop}[thm]{\protect\propositionname}
\theoremstyle{plain}
\newtheorem{question}[thm]{\protect\questionname}
\theoremstyle{definition}
\newtheorem{example}[thm]{\protect\examplename}
\providecommand{\examplename}{Example}
\providecommand{\lemmaname}{Lemma}
\providecommand{\propositionname}{Proposition}
\providecommand{\questionname}{Question}
\providecommand{\remarkname}{Remark}
\providecommand{\theoremname}{Theorem}
\begin{document}
\global\long\def\Ind{\operatorname{Ind}}%

\global\long\def\Whis{\operatorname{Whis}}%

\global\long\def\Gr{\operatorname{Gr}}%

\global\long\def\init{\operatorname{init}}%

\global\long\def\st{\operatorname{star}}%

\global\long\def\del{\operatorname{del}}%

\global\long\def\link{\operatorname{link}}%

\global\long\def\image{\operatorname{image}}%

\global\long\def\cc{\mathbb{C}}%

\global\long\def\ff{\mathbb{F}}%

\global\long\def\pp{\mathbb{P}}%

\global\long\def\qq{\mathbb{Q}}%

\global\long\def\rr{\mathbb{R}}%

\global\long\def\zz{\mathbb{Z}}%

\global\long\def\normalin{\mathrel{\lhd}}%

\global\long\def\innormal{\mathrel{\rhd}}%

\global\long\def\semidirect{\mathbin{\rtimes}}%

\global\long\def\Stab{\operatorname{Stab}}%

\global\long\def\bdry{\partial}%

\global\long\def\susp{\operatorname{susp}}%

\global\long\def\lrprod{\mathop{\check{\prod}}}%

\global\long\def\lrtimes{\mathbin{\check{\times}}}%

\global\long\def\urtimes{\mathbin{\hat{\times}}}%

\global\long\def\urprod{\mathop{\hat{\prod}}}%

\global\long\def\subsetdot{\mathrel{\subset\!\!\!\!{\cdot}\,}}%

\global\long\def\dotsupset{\mathrel{\supset\!\!\!\!\!\cdot\,\,}}%

\global\long\def\precdot{\mathrel{\prec\!\!\!\cdot\,}}%

\global\long\def\dotsucc{\mathrel{\cdot\!\!\!\succ}}%

\global\long\def\des{\operatorname{des}}%

\global\long\def\rank{\operatorname{rank}}%

\global\long\def\height{\operatorname{height}}%

\global\long\def\modreln{\mathrel{M}}%

\global\long\def\link{\operatorname{link}}%

\global\long\def\freejoin{\mathbin{\circledast}}%

\global\long\def\stellarsd{\operatorname{stellar}}%

\global\long\def\conv{\operatorname{conv}}%

\global\long\def\disjointunion{\mathbin{\dot{\cup}}}%

\global\long\def\skel{\operatorname{skel}}%

\global\long\def\depth{\operatorname{depth}}%

\global\long\def\st{\operatorname{star}}%

\global\long\def\alexdual#1{#1^{\vee}}%

\global\long\def\reg{\operatorname{reg}}%

\global\long\def\shift{\operatorname{Shift}}%

\global\long\def\Homolred{\tilde{H}}%

\global\long\def\Dom{\operatorname{Dom}}%

\global\long\def\cosetposet{\overline{\mathfrak{C}}}%

\global\long\def\cosetlat{\mathfrak{C}}%

\global\long\def\ordcong{\mathcal{O}}%

\title{An algebraic groups perspective on Erd\H{o}s--Ko--Rado}
\author{Russ Woodroofe}
\address{Univerza na Primorskem, Glagoljaška 8, 6000 Koper, Slovenia}
\email{russ.woodroofe@famnit.upr.si}
\thanks{This work is supported in part by the Slovenian Research Agency (research
program P1-0285 and research projects J1-9108, N1-0160, J1-2451, J3-3003). }
\urladdr{\url{https://osebje.famnit.upr.si/~russ.woodroofe/}}
\begin{abstract}
We give a proof of the Erd\H{o}s--Ko--Rado Theorem using the Borel
Fixed Point Theorem from algebraic group theory. This perspective
gives a strong analogy between the Erd\H{o}s--Ko--Rado Theorem and
(generalizations of) the Gerstenhaber Theorem on spaces of nilpotent
matrices.
\end{abstract}

\maketitle

\section{\label{sec:Introduction}Introduction}

A family of sets is \emph{intersecting} if every pair of sets in the
family intersect nontrivially. The systematic study of intersecting
families of sets began with a 1961 paper of Erd\H{o}s, Ko, and Rado
\cite{Erdos/Ko/Rado:1961}, in which these authors characterized the
largest possible intersecting family of uniform sets.
\begin{thm}[Erd\H{o}s--Ko--Rado \cite{Erdos/Ko/Rado:1961}]
\label{thm:EKR} Suppose that $k\leq n/2$. If $\mathcal{A}$ is
an intersecting family of $k$-element subsets of $\left[n\right]$,
then $\left|\mathcal{A}\right|\leq{n-1 \choose k-1}$. If more strongly
$k<n/2$, then the equality $\left|\mathcal{A}\right|={n-1 \choose k-1}$
holds only if all sets in $\mathcal{A}$ share a common element.
\end{thm}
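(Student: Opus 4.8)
The plan is to translate intersecting families into linear data carrying a group action and then apply the Borel Fixed Point Theorem. Work over $\cc$, and set $V=\bigwedge^{k}\cc^{n}$, with standard monomial basis $e_{S}=e_{i_{1}}\wedge\cdots\wedge e_{i_{k}}$ indexed by the $k$-subsets $S=\{i_{1}<\cdots<i_{k}\}$ of $[n]$. To a family $\mathcal{A}$ of $k$-subsets attach the coordinate subspace $W_{\mathcal{A}}=\operatorname{span}\{e_{S}:S\in\mathcal{A}\}$, so that $\dim W_{\mathcal{A}}=\lvert\mathcal{A}\rvert$. Since $e_{S}\wedge e_{T}=\pm e_{S\cup T}$ is nonzero in $\bigwedge^{2k}\cc^{n}$ exactly when $S\cap T=\emptyset$ --- this is where the hypothesis $2k\leq n$ enters --- we obtain the dictionary
\[
\mathcal{A}\text{ is intersecting}\quad\Longleftrightarrow\quad W_{\mathcal{A}}\wedge W_{\mathcal{A}}=0\ \text{ in }\ \bigwedge\nolimits^{2k}\cc^{n}.
\]
So Theorem~\ref{thm:EKR} amounts to two claims: a coordinate subspace $W\subseteq V$ with $W\wedge W=0$ satisfies $\dim W\leq\binom{n-1}{k-1}$; and, when $k<n/2$, every extremal such $W$ is a \emph{star} $\operatorname{span}\{e_{S}:i\in S\}=e_{i}\wedge\bigwedge^{k-1}\operatorname{span}(e_{j}:j\neq i)$. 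The first claim will in fact hold for arbitrary --- not necessarily coordinate --- subspaces $W$ with $W\wedge W=0$, a statement in the spirit of Gerstenhaber's theorem on spaces of nilpotent matrices.

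For the bound, let $W\subseteq V$ satisfy $W\wedge W=0$, put $d=\dim W$, and consider the Grassmannian $\Gr(d,V)$, on which $\mathrm{GL}_{n}$ acts through its action on $V$. The subset $Z=\{W'\in\Gr(d,V):W'\wedge W'=0\}$ is Zariski closed and $\mathrm{GL}_{n}$-stable, hence a complete $\mathrm{GL}_{n}$-variety, and it is nonempty because $W\in Z$. By the Borel Fixed Point Theorem, the Borel subgroup $B$ of upper-triangular matrices has a fixed point $W^{*}\in Z$. As the diagonal torus $T$ lies in $B$ and $V$ is a multiplicity-free $T$-module (distinct $e_{S}$ carry distinct weights), $W^{*}$ is $T$-stable and hence a coordinate subspace $W^{*}=W_{\mathcal{A}^{*}}$; its stability under the remaining generators $I+tE_{ij}$ of $B$ (with $i<j$), which send $e_{S}$ to $e_{S}\pm t\,e_{(S\setminus\{j\})\cup\{i\}}$ when $j\in S$ and $i\notin S$ and fix $e_{S}$ otherwise, says precisely that $\mathcal{A}^{*}$ is a \emph{shifted} family. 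Thus there is a shifted intersecting family $\mathcal{A}^{*}$ of $k$-subsets of $[n]$ with $\lvert\mathcal{A}^{*}\rvert=d$, and it suffices to bound such families. This is elementary by induction on $n$: the base case $n=2k$ is immediate, since an intersecting family contains at most one set from each complementary pair and so has at most $\tfrac{1}{2}\binom{2k}{k}=\binom{2k-1}{k-1}$ members; for $n>2k$, split $\mathcal{A}^{*}$ according to whether $n\in A$, letting $\mathcal{A}'$ be the part with $n\notin A$ and $\mathcal{A}''$ be the part with $n\in A$, with $n$ deleted. Then $\mathcal{A}'$ is a shifted intersecting family of $k$-subsets of $[n-1]$, while $\mathcal{A}''$ is a shifted family of $(k-1)$-subsets of $[n-1]$ that is --- and this is the crucial point --- still intersecting: if $A_{1}\cap A_{2}=\{n\}$ then $\lvert A_{1}\cup A_{2}\rvert=2k-1<n$, so $n$ may be shifted down in $A_{1}$ to a point outside $A_{1}\cup A_{2}$, contradicting intersectingness. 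Since $n-1\geq 2k$ and $n-1\geq 2(k-1)$, the inductive hypothesis and Pascal's identity give $d=\lvert\mathcal{A}'\rvert+\lvert\mathcal{A}''\rvert\leq\binom{n-2}{k-1}+\binom{n-2}{k-2}=\binom{n-1}{k-1}$; applied to $W=W_{\mathcal{A}}$ this reads $\lvert\mathcal{A}\rvert\leq\binom{n-1}{k-1}$.

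For the equality case, assume $k<n/2$ and $\lvert\mathcal{A}\rvert=\binom{n-1}{k-1}$. The subspace $\operatorname{Ann}_{\wedge}(W):=\{v\in\cc^{n}:v\wedge W=0\}$ is $\mathrm{GL}_{n}$-equivariantly defined, and for $W=W_{\mathcal{A}}$ it equals $\operatorname{span}(e_{i}:i\in\bigcap_{S\in\mathcal{A}}S)$; thus $W_{\mathcal{A}}$ is a star precisely when $\operatorname{Ann}_{\wedge}(W_{\mathcal{A}})\neq 0$. The plan is to show that the extremal locus $Z_{\max}\subseteq\Gr(\binom{n-1}{k-1},V)$ of subspaces with $W\wedge W=0$ is a single $\mathrm{GL}_{n}$-orbit, that of the star $W_{0}=e_{1}\wedge\bigwedge^{k-1}\operatorname{span}(e_{2},\dots,e_{n})$: this is the equality case of the Gerstenhaber-type bound, and for $k=2$ it follows from the classical classification of the linear subspaces of $\bigwedge^{2}\cc^{n}$ consisting of decomposable vectors --- the subspaces $\ell\wedge\cc^{n}$ and $\bigwedge^{2}P$ with $\dim P=3$ --- only the former reaching dimension $n-1>3$. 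Granting this, an extremal coordinate subspace $W$ equals $gW_{0}$ for some $g\in\mathrm{GL}_{n}$, so $\operatorname{Ann}_{\wedge}(W)=\cc\cdot ge_{1}$ is one-dimensional and, being $\mathrm{GL}_{n}$-equivariantly attached to the $T$-stable $W$, it is itself $T$-stable, hence equal to $\cc e_{i}$ for some $i$. Then $e_{i}\wedge W=0$, so $W\subseteq e_{i}\wedge\bigwedge^{k-1}\operatorname{span}(e_{j}:j\neq i)$; comparing dimensions forces equality, so $\mathcal{A}=\{S:i\in S\}$ is the star at $i$.

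The main obstacle is the equality case --- specifically, showing that $Z_{\max}$ is a single $\mathrm{GL}_{n}$-orbit, equivalently that there is no exotic maximal subspace with $W\wedge W=0$ apart from the stars. This is classical for $k=2$, but in general it is the substantive geometric content, and it is exactly here that the parallel with the (equally delicate) equality case of Gerstenhaber's theorem is sharpest. Should that structure result prove stubborn, the equality case can instead be settled by a classical Hilton--Milner-type ``un-shifting'' argument, at the cost of stepping outside the algebraic-groups framework for that part.
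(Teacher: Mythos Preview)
Your proof of the inequality is essentially identical to the paper's: embed the family in $\Lambda^{k}\cc^{n}$, form the closed $GL_{n}$-stable subvariety $\{W\in\Gr_{d}(\Lambda^{k}\cc^{n}):W\wedge W=0\}$, apply the Borel Fixed-Point Theorem to obtain a $B_{n}$-fixed point, deduce from torus-fixedness and $U_{n}$-fixedness that this point corresponds to a shifted intersecting family, and finish with the standard link/deletion induction for shifted families. The paper packages exactly these steps in Sections~\ref{subsec:ProjVarieties}, \ref{subsec:EKRforShifted}, and~\ref{sec:ProofMain}.

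For the equality case, however, you should be aware that the paper does \emph{not} give an algebraic-groups proof of it. The paper only derives the inequality of Theorem~\ref{thm:EKR} from Theorem~\ref{thm:ExtEKR}, and explicitly flags the structural part as open: see the paragraph after Theorem~\ref{thm:HiltonMilner} and especially Question~\ref{que:ExteriorHM} in Section~\ref{sec:TowardsAlgHM}. Your proposed route---showing that the extremal locus $Z_{\max}$ is a single $GL_{n}$-orbit---is precisely the content of that open question (a common annihilating linear form is the same as membership in the orbit of a star, by Lemma~\ref{lem:Dibag}). You correctly identify this as the ``main obstacle'' and do not resolve it; the $k=2$ case you cite is indeed classical, but the general case is not known, and the paper's Section~\ref{sec:TowardsAlgHM} discusses exactly why the Draisma--Kraft--Kuttler strategy does not transfer cleanly (Example~\ref{exa:CrossExteriorExample}). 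So this part of your proposal is a genuine gap, one that coincides with the gap the paper itself leaves open; your fallback to a combinatorial un-shifting argument would work but, as you note, abandons the algebraic-groups framework.
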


Theorem~\ref{thm:EKR}, while a non-trivial result, is especially
noted for admitting a large number of proofs. These tend to come in
one of several flavors: the original proof of \cite{Erdos/Ko/Rado:1961}
developed the idea of combinatorial shifting of families of sets,
the well-known proof of Katona \cite{Katona:1972} uses the $S_{n}$
symmetry of $\left[n\right]$ for double-counting, and there are several
proofs that are based on linear algebra \cite{Frankl/Furedi:2012,Furedi/Hwang/Weichsel:2006,Godsil/Meagher:2016}.

There are a number of generalizations of the Erd\H{o}s--Ko--Rado
theorem to different settings, often with different notions of intersecting.
Such theorems have been described as saying that the largest possible
construction is the obvious candidate.

Another family of results that say that the largest construction is
the obvious one comes from the world of nilpotent matrices. Early
results of this form were proved by Gerstenhaber.
\begin{thm}[Gerstenhaber \cite{Gerstenhaber:1958}; Serezhkin \cite{Serezhkin:1985}
removed a restriction on the field]
\label{thm:Gerstenhaber}Let $M_{n}$ be the vector space of $n\times n$
matrices over some field $\ff$. If $V$ is a vector subspace of $M_{n}$
consisting of nilpotent matrices, then $\dim V\leq{n \choose 2}$.
Moreover, equality holds only if $V$ is conjugate to the subalgebra
of $M_{n}$ consisting of strictly upper triangular matrices.
\end{thm}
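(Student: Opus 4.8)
The plan is to run the same algebraic-geometric machine used for Theorem~\ref{thm:EKR}: realize the $d$-dimensional spaces of nilpotent matrices as the points of a projective $GL_n$-variety and locate the ``obvious candidate'' --- the space $\mathfrak{n}$ of strictly upper-triangular matrices --- as the fixed point of a Borel subgroup. After extending scalars I may assume $\ff$ algebraically closed: descent of the dimension bound is trivial, and descent of the equality statement is a torsor computation over the normalizer of $\mathfrak{n}$, which will turn out to be a split solvable group and hence have vanishing $H^1$ over every field --- this is one way to see Serezhkin's removal of the restriction on $\ff$.

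\emph{Setup and the bound.} Let $\mathcal{N}\subseteq M_n$ be the Zariski-closed cone of nilpotent matrices and put $\mathcal{X}_d:=\{W\in\Gr(d,M_n):W\subseteq\mathcal{N}\}$, a closed, hence projective, subvariety of the Grassmannian on which $GL_n$ acts by conjugation; with $d=\dim V$ we have $V\in\mathcal{X}_d$. Let $B\le GL_n$ be the Borel subgroup of invertible upper-triangular matrices and $T\le B$ the diagonal torus. The orbit closure $\overline{GL_n\cdot V}\subseteq\mathcal{X}_d$ is a non-empty projective $B$-variety, so by the Borel Fixed Point Theorem it has a $B$-fixed point $V_0$, i.e.\ a $d$-dimensional subspace with $bV_0b^{-1}=V_0$ for all $b\in B$. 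Being $T$-stable, $V_0$ is a sum of $T$-weight spaces of the conjugation representation on $M_n$; these are the lines $\ff E_{ij}$ (weight $\epsilon_i-\epsilon_j$, $i\ne j$) and the space of diagonal matrices (weight $0$). Since the only nilpotent diagonal matrix is $0$, we get $V_0=\bigoplus_{(i,j)\in S}\ff E_{ij}$ for a set $S$ of off-diagonal positions. If some $(i,j)\in S$ had $i>j$, then $E_{ji}$ is strictly upper-triangular, so $I+tE_{ji}\in B$; since $(I+tE_{ji})E_{ij}(I-tE_{ji})=E_{ij}+t(E_{jj}-E_{ii})-t^2E_{ji}$ lies in $V_0$ for every $t$ in the infinite field $\ff$, each coefficient lies in $V_0$, so $E_{jj}-E_{ii}\in V_0$ is a non-zero diagonal matrix --- a contradiction. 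Hence $S$ consists of strictly-upper-triangular positions, $V_0\subseteq\mathfrak{n}$, and $\dim V=\dim V_0\le\dim\mathfrak{n}=\binom{n}{2}$.

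\emph{The equality case.} Suppose $\dim V=\binom{n}{2}$. Then $V_0\subseteq\mathfrak{n}$ with $\dim V_0=\binom{n}{2}$, so $V_0=\mathfrak{n}$ and $\mathfrak{n}\in\overline{GL_n\cdot V}$; the same computation shows $\mathfrak{n}$ is the \emph{only} $B$-fixed point of $\mathcal{X}_{\binom{n}{2}}$. Next, $GL_n\cdot\mathfrak{n}$ is closed in $\mathcal{X}_{\binom{n}{2}}$: its stabilizer is the normalizer $N_{GL_n}(\mathfrak{n})$, which preserves the standard complete flag (recoverable from $\mathfrak{n}$, e.g.\ via $F_{n-k}=\mathfrak{n}^{k}\ff^n$), so $N_{GL_n}(\mathfrak{n})=B$; as $B$ is parabolic, $GL_n\cdot\mathfrak{n}\cong GL_n/B$ is complete, hence closed, of dimension $n^2-\binom{n+1}{2}=\binom{n}{2}$. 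Now $\overline{GL_n\cdot V}$ is irreducible and contains the closed subvariety $GL_n\cdot\mathfrak{n}$ of dimension $\binom{n}{2}$, so if $V\notin GL_n\cdot\mathfrak{n}$ then $\binom{n}{2}<\dim GL_n\cdot V$. It therefore suffices to prove $\dim\mathcal{X}_{\binom{n}{2}}\le\binom{n}{2}$; combined with the previous inequality this forces $V\in GL_n\cdot\mathfrak{n}$, i.e.\ $V$ is conjugate to $\mathfrak{n}$.

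\emph{Where the difficulty lies.} Establishing $\dim\mathcal{X}_{\binom{n}{2}}\le\binom{n}{2}$ is, I expect, the real work. Since $GL_n$ is connected, each irreducible component of $\mathcal{X}_{\binom{n}{2}}$ is $GL_n$-stable and projective, hence contains a $B$-fixed point, necessarily $\mathfrak{n}$; so \emph{every} component passes through the single point $\mathfrak{n}$, and it would be enough to show $\mathcal{X}_{\binom{n}{2}}$ is smooth at $\mathfrak{n}$ with $\dim T_{\mathfrak{n}}\mathcal{X}_{\binom{n}{2}}=\binom{n}{2}$. Two routes suggest themselves. (i) A direct tangent-space computation: a first-order deformation of $\mathfrak{n}$ inside $\mathcal{X}_{\binom{n}{2}}$ is a $\phi\in\operatorname{Hom}(\mathfrak{n},M_n/\mathfrak{n})$ whose lift $\tilde\phi$ satisfies the linearized nilpotency relations $\sum_{k=0}^{n-1}w^{k}\,\tilde\phi(w)\,w^{n-1-k}=0$ for all $w\in\mathfrak{n}$, and one must check these cut the space of such $\phi$ down to the $\binom{n}{2}$-dimensional tangent space of the orbit $GL_n\cdot\mathfrak{n}$. (ii) Induction on $n$: one would first show --- this is the crux --- that a $\binom{n}{2}$-dimensional space $V$ of nilpotent matrices has a common kernel vector $v$, then observe that $V$ induces on $\ff^n/\ff v$ a $\binom{n-1}{2}$-dimensional space of nilpotent matrices together with the full space of rank-$\le 1$ maps into $\ff v$; the inductive hypothesis triangularizes the former, and assembling the pieces exhibits a flag with respect to which $V=\mathfrak{n}$. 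Either way, the Borel Fixed Point Theorem delivers the bound and the weaker statement $\mathfrak{n}\in\overline{GL_n\cdot V}$ for free; it is the passage from the orbit closure to the orbit itself that requires this extra local or inductive ingredient.
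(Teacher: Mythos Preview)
The paper does not prove Theorem~\ref{thm:Gerstenhaber}; it is quoted as a known result of Gerstenhaber (with Serezhkin removing the field restriction) and is used only as motivation and as an analogy for the paper's main Theorem~\ref{thm:ExtEKR}. So there is no ``paper's own proof'' to compare against.

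That said, your argument for the dimension bound is correct and is precisely in the spirit the paper advocates: it is the Draisma--Kraft--Kuttler style application of the Borel Fixed-Point Theorem, and is the direct analogue for $M_n$ of what the paper does for $\Lambda^k\cc^n$ in Section~\ref{sec:ProofMain}. The weight-space analysis and the conjugation computation forcing $V_0\subseteq\mathfrak{n}$ are clean, and the descent-of-the-bound remark is fine.

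The equality case, however, is not a proof but a plan --- and you say so yourself. You reduce to showing $\dim\mathcal{X}_{\binom{n}{2}}\le\binom{n}{2}$ (equivalently, that the orbit $GL_n\cdot\mathfrak{n}$ is a component, or that $\mathcal{X}_{\binom{n}{2}}$ is smooth of the right dimension at $\mathfrak{n}$), and then list two possible routes without executing either. Route~(ii) in particular hides the whole difficulty in the clause ``first show \dots\ that a $\binom{n}{2}$-dimensional space $V$ of nilpotent matrices has a common kernel vector'': that statement is essentially equivalent to what you are trying to prove, and is exactly where Gerstenhaber and his successors do the real work. Route~(i), the tangent-space computation, is plausible but is not carried out. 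Your $H^1$ remark for descent of the equality case is correct in outline (the stabilizer is $B$, split solvable, so Galois $H^1$ vanishes), but it only becomes relevant once the equality case is established over the algebraic closure. In short: the bound is proved; the structural statement is not.
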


It is interesting to remark that Gerstenhaber's work was published
at roughly the same time as Erd\H{o}s, Ko, and Rado published their
work. 

Gerstenhaber's work has been generalized to arbitrary Lie algebras
in work of Meshulam and Radwan \cite{Meshulam/Radwan:1998}, and of
Draisma, Kraft, and Kuttler \cite{Draisma/Kraft/Kuttler:2006}. The
Lie algebra terminology in the following theorem will not be used
in the remainder of the paper, and the unfamiliar reader may pass
over it lightly.
\begin{thm}
\label{thm:LieNilpotentSubspace}Let $\mathfrak{g}$ be a complex
semi-simple Lie algebra. If $V$ is a vector subspace of $\mathfrak{g}$
consisting of elements having nilpotent adjoint transformation, then
\begin{enumerate}
\item $\dim V\leq\frac{1}{2}\left(\dim\mathfrak{g}-\rank\mathfrak{g}\right)$
\cite{Meshulam/Radwan:1998}, and
\item equality holds only if $V$ is the nilradical of a Borel subalgebra
of $\mathfrak{g}$ \cite{Draisma/Kraft/Kuttler:2006}.
\end{enumerate}
\end{thm}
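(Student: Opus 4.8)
The plan is to run the Borel Fixed Point strategy directly in the Lie algebra, mirroring the approach to Theorem~\ref{thm:EKR}. Let $G$ be the adjoint group of $\mathfrak{g}$, acting on $\mathfrak{g}$ by the adjoint representation, and let $\mathcal{N}\subseteq\mathfrak{g}$ be the nilpotent cone, i.e.\ the set of $x$ with $\operatorname{ad}x$ nilpotent. I will freely use the standard structure theory: $\mathcal{N}$ is closed --- it is cut out by the non-leading coefficients of the characteristic polynomial of $\operatorname{ad}x$ --- and irreducible; $\mathcal{N}=\bigcup_{\mathfrak{b}}\mathfrak{n}(\mathfrak{b})$, where $\mathfrak{b}$ ranges over the Borel subalgebras and $\mathfrak{n}(\mathfrak{b})$ is the nilradical; and $\dim\mathfrak{n}(\mathfrak{b})$ equals the number of positive roots, namely $N:=\frac{1}{2}(\dim\mathfrak{g}-\rank\mathfrak{g})$. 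For each $d$, the set $X_{d}\subseteq\Gr(d,\mathfrak{g})$ of $d$-dimensional subspaces of $\mathfrak{g}$ lying in $\mathcal{N}$ is closed (the condition $W\subseteq\mathcal{N}$ is Zariski-closed on the Grassmannian), hence projective, and it is $G$-stable. The theorem asserts that $X_{d}$ is empty for $d>N$, and that $X_{N}$ consists exactly of the nilradicals $\mathfrak{n}(\mathfrak{b})$.

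For the dimension bound: given a point $V$ of $X_{d}$, the projective variety $X_{d}$ is nonempty, so by the Borel Fixed Point Theorem a Borel subgroup $B=TU$ of $G$ has a fixed point $W\in X_{d}$; equivalently, $W$ is a $d$-dimensional $B$-submodule of $\mathfrak{g}$ contained in $\mathcal{N}$. As $W$ is $T$-stable it is a sum of weight spaces, $W=(W\cap\mathfrak{h})\oplus\bigoplus_{\alpha\in S}\mathfrak{g}_{\alpha}$ for a set $S$ of roots. No nonzero element of $\mathfrak{h}$ is ad-nilpotent, since in a semisimple Lie algebra such an element is semisimple and non-central, hence has a nonzero eigenvalue; so $W\cap\mathfrak{h}=0$. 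If some $\alpha\in S$ were a negative root, then $U$-stability of $W$ would give $[\mathfrak{g}_{-\alpha},\mathfrak{g}_{\alpha}]\subseteq W$, contradicting $W\cap\mathfrak{h}=0$ because $[\mathfrak{g}_{-\alpha},\mathfrak{g}_{\alpha}]$ is a nonzero line in $\mathfrak{h}$. Hence $S$ consists of positive roots, $W\subseteq\mathfrak{n}(\mathfrak{b})$, and $d=\dim W\le N$; the bound is attained since $\mathfrak{n}(\mathfrak{b})\in X_{N}$.

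For the equality case, fix $V\in X_{N}$. I first claim $V$ meets the regular nilpotent orbit $\mathcal{O}$. Since $\mathcal{O}$ is open in $\mathcal{N}$, the complement $\mathcal{N}\setminus\mathcal{O}$ is closed and $G$-stable; were some $N$-dimensional nilpotent subspace contained in $\mathcal{N}\setminus\mathcal{O}$, the previous paragraph applied with $\mathcal{N}\setminus\mathcal{O}$ in place of $\mathcal{N}$ would supply an $N$-dimensional $B$-submodule lying in $\mathfrak{n}(\mathfrak{b})\setminus\mathcal{O}$, which is impossible: $\mathfrak{n}(\mathfrak{b})$ is irreducible of dimension $N$ and meets $\mathcal{O}$ (for instance at the sum $\sum_{i}e_{\alpha_{i}}$ of simple root vectors), so $\mathfrak{n}(\mathfrak{b})\setminus\mathcal{O}$ has strictly smaller dimension. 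Therefore $V$ contains a regular nilpotent element $e$; let $\mathfrak{b}_{e}$ be the unique Borel subalgebra with $e\in\mathfrak{n}(\mathfrak{b}_{e})$ (the Springer fibre over a regular nilpotent is a point). It now suffices to prove $V\subseteq\mathfrak{n}(\mathfrak{b}_{e})$, for then comparing dimensions forces $V=\mathfrak{n}(\mathfrak{b}_{e})$.

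This last inclusion is, I expect, the main obstacle, and the point at which the soft algebraic-group arguments no longer suffice. The approach I would take: by Kostant's theory the Slodowy slice $e+\mathfrak{z}(f)$ attached to $e$ (for an $\mathfrak{sl}_{2}$-triple $(e,h,f)$) meets $\mathcal{N}$ only in $e$, while $\mathfrak{g}=\mathfrak{z}(f)\oplus[\mathfrak{g},e]$; from this one shows that, for every $v\in V$, since all points $e+tv$ are nilpotent the curve $t\mapsto e+tv$ lies in $\mathcal{O}$ for $t$ near $0$, so differentiating at $t=0$ gives $v\in T_{e}\mathcal{O}=[\mathfrak{g},e]$, whence $V\subseteq[\mathfrak{g},e]=\image(\operatorname{ad}e)$. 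Grading $\mathfrak{g}$ by the semisimple element of the (principal) $\mathfrak{sl}_{2}$-triple through $e$, one checks that $[\mathfrak{g},e]$ contains the Borel subalgebra $\mathfrak{b}_{e}$ and decomposes as $\mathfrak{b}_{e}\oplus R$ with $R$ sitting in strictly negative degrees. The remaining, delicate step --- essentially the content of the Draisma--Kraft--Kuttler refinement --- is to show that the $N$-dimensional nilpotent subspace $V$ avoids $R$, i.e.\ lies in $\mathfrak{b}_{e}$; here one must use in an essential way the nilpotency of all of $V$, not merely the inclusion $V\subseteq[\mathfrak{g},e]$. Granting this, $V\subseteq\mathfrak{b}_{e}$, and since the nilpotent elements of $\mathfrak{b}_{e}$ are exactly $\mathfrak{n}(\mathfrak{b}_{e})$, we obtain $V\subseteq\mathfrak{n}(\mathfrak{b}_{e})$, completing the argument.
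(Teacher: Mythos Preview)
The paper does not give its own proof of Theorem~\ref{thm:LieNilpotentSubspace}; the result is quoted from \cite{Meshulam/Radwan:1998} and \cite{Draisma/Kraft/Kuttler:2006} purely as background, with the explicit remark that the Lie-theoretic terminology ``will not be used in the remainder of the paper.'' So there is no in-paper proof to compare against.

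That said, your argument for part~(1) is correct and is exactly in the spirit of the paper's approach to Theorem~\ref{thm:ExtEKR}: pass to the projective variety $X_d\subseteq\Gr(d,\mathfrak{g})$ of $d$-dimensional subspaces of $\mathcal{N}$, apply the Borel Fixed-Point Theorem, and analyze the resulting $B$-stable subspace via its $T$-weight decomposition. The steps ruling out a Cartan component and negative-root summands are clean and accurate.

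For part~(2), your first claim --- that $V$ must contain a regular nilpotent --- is also correct, and is precisely the step the paper singles out in Section~\ref{sec:TowardsAlgHM} as the key idea of \cite{Draisma/Kraft/Kuttler:2006}: the non-regular locus is closed and $G$-stable, and rerunning the part-(1) argument inside it forces a $B$-fixed $N$-dimensional subspace equal to $\mathfrak{n}(\mathfrak{b})$, which does meet the regular orbit. Your tangent-space computation $V\subseteq T_e\mathcal{O}=[\mathfrak{g},e]$ is likewise valid. However, you explicitly leave the final step --- passing from $V\subseteq[\mathfrak{g},e]$ to $V\subseteq\mathfrak{b}_e$ --- as an unproved assumption (``Granting this''). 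That step is the genuine content of the Draisma--Kraft--Kuttler refinement, not a formality, and without it the equality case is not established. So what you have is a correct outline that honestly stops short of a complete proof of~(2).
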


In another recent generalization, Sweet and MacDougall \cite{Sweet/MacDougall:2009}
found (using only elementary techniques) the maximal dimension of
a space of nilpotent matrices of nilpotence degree $2$.

In the current paper, we prove the following result, which generalizes
the inequality of Theorem~\ref{thm:EKR} and is directly analogous
to instances of that in Theorems~\ref{thm:Gerstenhaber} and \ref{thm:LieNilpotentSubspace}.
Let $\Lambda\cc^{n}$ be the exterior algebra over the vector space
$\cc^{n}$, and let $e_{1},e_{2},\dots,e_{n}$ be the standard basis
for $\cc^{n}$. For a subset $S\subseteq\Lambda\cc^{n}$, we write
$S\wedge S$ for $\left\{ x\wedge y:x,y\in S\right\} $.
\begin{thm}
\label{thm:ExtEKR}Let $V$ be a vector subspace of $\Lambda^{k}\cc^{n}$
such that $V\wedge V=0$. If $k\leq n/2$, then $\dim V\leq{n-1 \choose k-1}$. 
\end{thm}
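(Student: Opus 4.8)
The plan is to bring the action of $GL_n(\cc)$ on $\Lambda^k\cc^n$ to bear, and in particular to apply the Borel Fixed Point Theorem to the Borel subgroup $B\subseteq GL_n(\cc)$ of upper triangular matrices. Put $d=\dim V$ and work inside the Grassmannian $\Gr(d,\Lambda^k\cc^n)$ of $d$-dimensional subspaces of $\Lambda^k\cc^n$. The condition $W\wedge W=0$ is the vanishing of the wedge product map on the tautological subbundle, hence defines a closed subvariety $\mathcal{X}\subseteq\Gr(d,\Lambda^k\cc^n)$; equivalently, $\mathcal{X}$ is cut out by polynomial equations in Pl\"ucker-type coordinates. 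Since $GL_n(\cc)$ acts on $\Lambda\cc^n$ by algebra automorphisms, so that $g(x)\wedge g(y)=g(x\wedge y)$, the variety $\mathcal{X}$ is $GL_n(\cc)$-stable, and in particular $B$-stable; it is nonempty because $V\in\mathcal{X}$. The Borel Fixed Point Theorem therefore supplies a $B$-fixed point $W_0\in\mathcal{X}$, which is again a $d$-dimensional subspace of $\Lambda^k\cc^n$ with $W_0\wedge W_0=0$.

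Next I would determine the $B$-fixed points of $\Gr(d,\Lambda^k\cc^n)$ combinatorially. The standard basis vectors $e_S=e_{s_1}\wedge\dots\wedge e_{s_k}$, for $S=\{s_1<\dots<s_k\}\subseteq[n]$, are weight vectors for the diagonal torus, so invariance of $W_0$ under the torus forces $W_0=\langle e_S:S\in\mathcal{F}\rangle$ for some family $\mathcal{F}$ of $k$-subsets. The root subgroup $I+tE_{ij}$ with $i<j$ sends $e_S$ to $e_S\pm t\,e_{(S\setminus\{j\})\cup\{i\}}$ when $j\in S$ and $i\notin S$, and fixes $e_S$ otherwise; hence invariance of $W_0$ under the full unipotent radical is equivalent to $\mathcal{F}$ being a \emph{down-set} in the shift order, in which $S'\le S$ means $s'_\ell\le s_\ell$ for all $\ell$. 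On the other hand $e_S\wedge e_T=\pm e_{S\cup T}$ when $S$ and $T$ are disjoint and $e_S\wedge e_T=0$ otherwise, and the nonzero products obtained this way are distinct basis vectors of $\Lambda^{2k}\cc^n$; hence $W_0\wedge W_0=0$ holds exactly when $\mathcal{F}$ is an intersecting family. Since $\dim V=d=\dim W_0=|\mathcal{F}|$, the theorem now follows from the combinatorial assertion that a shifted (i.e.\ down-closed) intersecting family of $k$-subsets of $[n]$ with $k\le n/2$ has at most $\binom{n-1}{k-1}$ members; note that the star $\{S:1\in S\}$ is such a family, so this bound is the natural one.

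To prove that assertion I would induct on $n$, removing the element $n$. If $k=1$ the family is a single point; if $n=2k$ an intersecting family of $k$-sets meets each complementary pair $\{S,[n]\setminus S\}$ at most once, and there are $\tfrac12\binom{n}{k}=\binom{n-1}{k-1}$ such pairs. Suppose $1<k<n/2$ and write $\mathcal{F}=\mathcal{F}_{\bar n}\cup\mathcal{F}_n$ according to whether $n\notin S$ or $n\in S$. The family $\mathcal{F}_{\bar n}$ is a shifted intersecting family of $k$-subsets of $[n-1]$, so $|\mathcal{F}_{\bar n}|\le\binom{n-2}{k-1}$ by induction. The link $\mathcal{L}=\{S\setminus\{n\}:S\in\mathcal{F}_n\}$ is a shifted family of $(k-1)$-subsets of $[n-1]$, and here shiftedness is exactly what is needed to see it remains intersecting: if $A,B\in\mathcal{L}$ were disjoint, then choosing $i\in[n-1]\setminus(A\cup B)$ (possible since $2k-2<n-1$) and shifting $n$ down to $i$ would put $A\cup\{i\}$ into $\mathcal{F}$, disjoint from $B\cup\{n\}\in\mathcal{F}$, a contradiction. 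By induction $|\mathcal{L}|\le\binom{n-2}{k-2}$, and adding gives $|\mathcal{F}|\le\binom{n-2}{k-1}+\binom{n-2}{k-2}=\binom{n-1}{k-1}$.

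The conceptual core — and the step requiring the most care — is the identification of the $B$-fixed locus of the Grassmannian with down-sets in the shift order: one must record the weights of $\Lambda^k\cc^n$, compute the root-group actions on the $e_S$, and check that unipotent invariance forces a down-closed family rather than merely a torus-invariant one. The combinatorial lemma, though it carries the classical content, is precisely the shifted case of Erd\H{o}s--Ko--Rado and is elementary; the point of the argument is that the Borel Fixed Point Theorem performs the combinatorial shifting step for free.
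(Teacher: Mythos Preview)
Your proposal is correct and follows essentially the same approach as the paper: apply the Borel Fixed-Point Theorem to the closed $GL_n$-stable subvariety of $\Gr_d(\Lambda^k\cc^n)$ cut out by $W\wedge W=0$, identify the resulting $B$-fixed subspace with a shifted intersecting family via the torus and root-subgroup actions, and then invoke the elementary inductive proof of Erd\H{o}s--Ko--Rado for shifted families (your link/deletion argument is the same one the paper gives in its background section).
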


Theorem~\ref{thm:EKR} obviously follows immediately from Theorem~\ref{thm:ExtEKR}
by associating with each $A\in\mathcal{A}$ the monomial $m_{A}$
in $\Lambda^{k}\cc^{n}$ that is supported by $A$, and letting $V=\left\langle m_{A}:A\in\mathcal{A}\right\rangle $.
Theorem~\ref{thm:ExtEKR} was first proved in the recent paper \cite{Scott/Wilmer:2021},
which shows it to follow from Theorem~\ref{thm:EKR}. 

Our proof of Theorem~\ref{thm:ExtEKR} here will instead use the
Borel Fixed-Point Theorem from the theory of algebraic groups, and
will be similar to the approach of \cite{Draisma/Kraft/Kuttler:2006}.
The resulting proof has a shifting-theoretic feel to it, and there
are relationships with combinatorial and algebraic shifting, as we
shall explain in Section \ref{sec:Shifting}.

It is natural to ask whether there is a proof of the structural part
of Theorem~\ref{thm:EKR} that is based on the Borel Fixed-Point
Theorem. Indeed, one could reasonably hope for such a proof of the
following stronger result:
\begin{thm}[Hilton--Milner \cite{Hilton/Milner:1967}]
\label{thm:HiltonMilner} For $2\leq k\leq n/2$, if $\mathcal{A}$
is an intersecting family of $k$-element subsets of $\left[n\right]$,
then $\left|\mathcal{A}\right|\leq{n-1 \choose k-1}-{n-k-1 \choose k-1}+1$
unless all sets in $\mathcal{A}$ share a common element.
\end{thm}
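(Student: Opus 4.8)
The natural plan is to imitate the proof of Theorem~\ref{thm:ExtEKR}. Associating to an intersecting family $\mathcal{A}$ the span $V=\langle e_{A}:A\in\mathcal{A}\rangle$ as before, one checks that the sets of $\mathcal{A}$ have no common element if and only if $V$ is not contained in $v\wedge\Lambda^{k-1}\cc^{n}$ for any nonzero $v\in\cc^{n}$ (using that $v\wedge x=0$ forces $x\in v\wedge\Lambda^{k-1}\cc^{n}$); note that this ``star'' condition on a subspace is $GL_{n}(\cc)$-stable. Writing $d=\dim V$, it would then suffice to bound $\dim W$ over the $GL_{n}(\cc)$-stable locus
\[
X'=\bigl\{\,W\in\Gr(d,\Lambda^{k}\cc^{n})\ :\ W\wedge W=0\text{ and }W\not\subseteq v\wedge\Lambda^{k-1}\cc^{n}\text{ for all }v\,\bigr\}.
\]
If $X'$ were complete, the Borel Fixed-Point Theorem would hand us a $B$-fixed monomial point---a shifted intersecting family that is not a star---and we would conclude by bounding such families.

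The difficulty is that $X'$ is \emph{not} complete. The set of subspaces $W$ lying in some $v\wedge\Lambda^{k-1}\cc^{n}$ is closed, being the image under projection to $\Gr(d,\Lambda^{k}\cc^{n})$ of the projective incidence variety $\{(v,W):W\subseteq v\wedge\Lambda^{k-1}\cc^{n}\}$; thus $X'$ is only \emph{open} inside the complete variety $X$ from the proof of Theorem~\ref{thm:ExtEKR}. I would therefore pass to the closure $\overline{X'}\subseteq X$ and apply Borel's theorem there, obtaining a $B$-fixed monomial point $W_{0}$, a shifted intersecting family $\mathcal{A}_{0}$ with $|\mathcal{A}_{0}|=d$. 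But $\mathcal{A}_{0}$ may lie on the boundary $\overline{X'}\setminus X'$---it may be a star (necessarily through the first coordinate, being shifted)---and then the dimension count gives only the weaker bound $d\leq{n-1\choose k-1}$. I expect the main obstacle to be precisely this: showing that a non-star subspace $W$ with $W\wedge W=0$ whose dimension exceeds the Hilton--Milner bound cannot have a star in the closure of its $B$-orbit---a local analysis at the boundary stars.

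Granting that, or else bypassing it, the remaining work is combinatorial and keeps the shifting flavor of the rest of the paper: one must bound a shifted intersecting family $\mathcal{A}$ of $k$-subsets of $[n]$ that is not a star. Being shifted, $\mathcal{A}$ contains the $\leq$-minimal $k$-set avoiding the first coordinate, namely $\{2,3,\dots,k+1\}$, so every member of $\mathcal{A}$ meets $\{2,\dots,k+1\}$; in particular the members containing $1$ number at most ${n-1\choose k-1}-{n-k-1\choose k-1}$. One then bounds the members avoiding $1$---themselves a shifted intersecting family on the ground set $\{2,\dots,n\}$---by induction on $n$, the point being to track how this count trades off against the previous one so that the total stays at most ${n-1\choose k-1}-{n-k-1\choose k-1}+1$. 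Finally one disposes separately of the exceptional case, arising in the reduction to a shifted family, in which every available compression of $\mathcal{A}$ would produce a star; there a direct argument---splitting $\mathcal{A}$ according to two coordinates and using cross-intersecting estimates---applies. Carrying out this trade-off, together with the boundary analysis above, is where the real effort goes; the rest runs parallel to the proof of Theorem~\ref{thm:ExtEKR}.
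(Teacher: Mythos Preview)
The paper does not prove Theorem~\ref{thm:HiltonMilner}. It is quoted as a known result of Hilton and Milner, and in Section~\ref{sec:TowardsAlgHM} the author explicitly says that he does not know whether a Borel Fixed-Point Theorem proof is possible; the section is devoted to \emph{obstacles}, not to a proof. So there is no ``paper's own proof'' to compare your attempt against.

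That said, your proposal is not a proof either, and you are candid about this. The obstacle you isolate---that the non-star locus $X'$ is open rather than closed in $\Gr_{d}(\Lambda^{k}\cc^{n})$, so that a Borel fixed point in $\overline{X'}$ may well be a star---is exactly the difficulty the paper flags. Your suggested remedy, a ``local analysis at the boundary stars'' showing that a non-star $W$ of large dimension cannot degenerate to a star under the $B$-action, is precisely the step the paper does not know how to carry out; see Question~\ref{que:ExteriorHM} and the discussion following it. The paper also points to a further obstruction you do not mention: Example~\ref{exa:CrossExteriorExample} shows that a self-annihilating $V$ can be spanned by elements with \emph{no} linear factor at all, so arguments that try to pin down a unique linear factor (in the style of \cite{Draisma/Kraft/Kuttler:2006}) run into trouble.

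Your combinatorial endgame---bounding a shifted intersecting non-star family via the forced presence of $\{2,\dots,k+1\}$ and an inductive count---is the standard shifting proof of Hilton--Milner (see the references \cite{Frankl:1987,Frankl/Furedi:1986} in the paper). That part is fine and well known; the genuine gap is entirely in the passage from $V$ to a shifted non-star family, and neither you nor the paper closes it.
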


I don't know whether a Borel Fixed-Point Theorem proof of Theorem~\ref{thm:HiltonMilner}
is possible, but will discuss possible approaches and obstructions
in Section~\ref{sec:TowardsAlgHM}.

The paper is organized as follows. In Section~\ref{sec:Background},
which is rather long, we discuss all the background material needed
from algebraic geometry and combinatorics. In Section~\ref{sec:ProofMain},
which is quite short, we give the algebraic group theory proof of
Theorem~\ref{thm:ExtEKR}. In Section~\ref{sec:Shifting}, we discuss
the relationship of the algebraic groups perspective with the techniques
of combinatorial and algebraic shifting. We finish in Section~\ref{sec:TowardsAlgHM}
with a discussion of possible extensions of Theorem~\ref{thm:HiltonMilner}
to the exterior algebra situation.

\section*{Acknowledgements}

Thanks to Roya Beheshti, Jan Draisma and to Mathematics Stackoverflow
user Lazzaro Campeotti \cite{Stackexchange} for helping me with the
algebraic geometry background, particularly with understanding how
to define a subvariety of the Grassmannian from a projective variety.
Thanks to Claude Roché for pointing out the relevant work of de Rham.
Thanks to Alex Scott and Elizabeth Wilmer for discussing the relationship
with their work. 

\section{\label{sec:Background}Background}

\subsection{Exterior algebras and intersecting sets}

The \emph{exterior algebra} $\Lambda\cc^{n}$ is an anticommutative
analogue of the algebra of polynomials in $n$ variables. More specifically,
$\Lambda\cc^{n}$ is the $\mathbb{C}$-algebra generated by $e_{1},\dots,e_{n}$
with product $\wedge$, and subject to the square relation $x\wedge x=0$
for $x\in\cc^{n}$. The square relation yields the anticommutative
relation $x\wedge y=-y\wedge x$ for $x,y\in\cc^{n}$. The exterior
algebra is a graded algebra, and the $k$th homogenous component $\Lambda^{k}\cc^{n}$
consists of all elements of homogenous degree $k$, that is, all linear
combinations of wedge products of $k$ of the $e_{i}$ generators.

An \emph{(exterior) monomial} in $\Lambda\cc^{n}$ has the form $\alpha e_{i_{1}}\wedge e_{i_{2}}\wedge\dots\wedge e_{i_{k}}$
for some $\left\{ i_{1},\dots,i_{k}\right\} \subseteq\left[n\right]$
and $\alpha\in\cc$. In this situation, we say that the monomial is
\emph{supported} by $\left\{ i_{1},\dots,i_{k}\right\} $. Thus, monomials
in $\Lambda\cc^{n}$ are in correspondence up to scalar multiplication
with subsets of $\left[n\right]$. Since the product of two monomials
is $0$ if and only if the corresponding subsets intersect, the exterior
algebra is well-known to be a useful model for systems of intersecting
sets. See for example \cite[Chapter 6]{Babai/Frankl:1992}.

Extending from a set system to a system of elements from $\Lambda\cc^{n}$
has the advantage that we extend the group that naturally acts on
our system. Indeed, the group $GL_{n}=GL_{n}(\cc)$ acts on the vector
space $\left\langle e_{1},\dots,e_{n}\right\rangle \cong\Lambda^{1}\cc^{n}$,
and this action extends naturally to an action on each homogenous
component $\Lambda^{k}\cc^{n}$, hence to $\Lambda\cc^{n}$.

There is a close relation between annihilation (that is, elements
having product $0$) and factorization in the exterior algebra. A
useful form of this was observed by de Rham, and later rediscovered
by Dibag. A \emph{linear factor} of an element $v\in\Lambda\cc^{n}$
is an $a\in\Lambda^{1}\cc^{n}$ so that $v=a\wedge w$ for some $w\in\Lambda\cc^{n}$. 
\begin{lem}[\cite{deRham:1954,Dibag:1974}]
 \label{lem:Dibag} An element $v\in\Lambda^{k}\cc^{n}$ has $a\in\Lambda^{1}\cc$
as a linear factor if and only if $a\wedge v=0$.
\end{lem}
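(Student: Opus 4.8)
The plan is to dispose of the easy implication immediately and then reduce the other one to a basis computation. If $v = a \wedge w$ for some $w \in \Lambda\cc^n$, then $a \wedge v = a \wedge a \wedge w = 0$, since $a \wedge a = 0$ is the defining square relation of the exterior algebra; this direction needs nothing further. So the content is the converse: assuming $a \wedge v = 0$ (and $a \neq 0$, the case $a = 0$ being degenerate), we must exhibit a $w$ with $v = a \wedge w$.

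For this I would first reduce to the case $a = e_1$ using the $GL_n$-action described above. Since $GL_n$ acts transitively on the nonzero vectors of $\Lambda^1\cc^n$, pick $g \in GL_n$ with $g \cdot a = e_1$; as $g$ acts by a graded algebra automorphism, the hypothesis $a \wedge v = 0$ becomes $e_1 \wedge (g \cdot v) = 0$, and a factorization $g \cdot v = e_1 \wedge w'$ yields $v = a \wedge (g^{-1} \cdot w')$, so it is enough to treat $a = e_1$. Now split $v$ along the monomial basis as $v = e_1 \wedge v_1 + v_0$, where $v_1 \in \Lambda^{k-1}\langle e_2, \dots, e_n\rangle$ and $v_0 \in \Lambda^{k}\langle e_2,\dots, e_n\rangle$ collect the monomials of $v$ that respectively do and do not involve $e_1$. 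Then $e_1 \wedge v = e_1 \wedge v_0$, because $e_1 \wedge e_1 \wedge v_1 = 0$. The linear map $u \mapsto e_1 \wedge u$ carries the monomial basis $\{e_S : S \subseteq \{2, \dots, n\},\ |S| = k\}$ of $\Lambda^k\langle e_2,\dots,e_n\rangle$ to distinct basis monomials $e_{\{1\} \cup S}$ of $\Lambda^{k+1}\cc^n$ up to sign, hence is injective; therefore $e_1 \wedge v = 0$ forces $v_0 = 0$, and $v = e_1 \wedge v_1$, as wanted.

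I do not expect any real obstacle here: the argument is short, and the only point that deserves a moment's care is that the passage through $GL_n$ is a genuine two-way equivalence, which holds because $GL_n$ acts by graded algebra automorphisms of $\Lambda\cc^n$. If one prefers to avoid invoking the group action, the same computation goes through verbatim after extending $a$ to a basis $a, f_2, \dots, f_n$ of $\Lambda^1\cc^n$ and decomposing $v$ with respect to the corresponding monomial basis of $\Lambda\cc^n$.
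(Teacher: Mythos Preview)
Your argument is correct. The forward direction is immediate, and for the converse your reduction to $a=e_{1}$ via the $GL_{n}$-action (or equivalently by extending $a$ to a basis) followed by the decomposition $v=e_{1}\wedge v_{1}+v_{0}$ and the observation that left multiplication by $e_{1}$ is injective on $\Lambda^{k}\langle e_{2},\dots,e_{n}\rangle$ is exactly the standard short proof of this fact.

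Note, however, that the paper does not actually supply its own proof of this lemma: it is stated with a citation to de~Rham and Dibag and used as a black box. So there is nothing in the paper to compare your approach against; you have simply filled in a result the paper quotes from the literature.
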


\subsection{Algebraic groups and shifted systems}

The group $GL_{n}$ is an example of an \emph{algebraic group}, since
its multiplication and addition operations can be expressed coordinate-wise
by polynomials. Subgroups of $GL_{n}$ that are given by the zeros
of polynomials (in some precise sense) are also algebraic groups.
All subgroups of $GL_{n}$ that we discuss here are algebraic.

We will use the following fundamental theorem from linear algebraic
groups and algebraic geometry, which may be found in numerous textbooks
\cite{Humphreys:1975,Milne:2017,Wallach:2017}. A\emph{ projective
algebraic variety} is a subset of projective space $\mathbb{P}^{n}\cong\mathbb{C}^{n+1}/\sim$
(where $\sim$ identifies points differing by a non-zero scalar multiple)
given by the zeros of a finite family of homogenous polynomials in
$n+1$ variables. Given a vector space $V$, we write $\pp(V)$ for
the projective space obtained by identifying non-zero scalar multiples.
Thus, for example $\pp^{n}=\pp(\cc^{n+1})$. 
\begin{thm}[Borel Fixed-Point Theorem]
\label{thm:BorelFPT} If $X\neq\emptyset$ is a projective algebraic
variety over an algebraically closed field, and $G$ is a connected,
solvable, linear algebraic group acting on $X$ by morphisms, then
there is a point in $X$ that is fixed by the action of $G$.
\end{thm}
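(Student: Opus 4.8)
The plan is to argue by induction on $\dim G$, peeling off the commutator subgroup and then exploiting the tension between completeness and affineness. The base case is immediate: a connected algebraic group of dimension $0$ is trivial, so every point of the nonempty $X$ is fixed. For the inductive step I would assume $\dim G>0$ and that the theorem holds for every connected solvable linear algebraic group of strictly smaller dimension acting on a nonempty complete variety.

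The first reduction is to pass to the derived subgroup $N=[G,G]$. Since $G$ is connected, $N$ is again closed and connected, and since $G$ is solvable and nontrivial we have $N\subsetneq G$; because a proper closed connected (hence irreducible) subvariety of the irreducible variety $G$ has strictly smaller dimension, we get $\dim N<\dim G$. The induction hypothesis applied to $N$ acting on $X$ then produces a nonempty fixed locus $Y=X^{N}$. This $Y$ is closed in $X$ (fixed loci of morphisms are closed), hence a nonempty complete variety, and it is $G$-stable because $N$ is normal in $G$. The problem is thereby reduced to finding a $G$-fixed point inside $Y$.

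The geometric heart of the argument is to choose $x\in Y$ whose orbit $G\cdot x$ has minimal dimension among all $G$-orbits meeting $Y$. Each orbit is locally closed, and the boundary of an orbit is a union of orbits of strictly smaller dimension, so minimality forces $G\cdot x$ to be closed in $Y$ and therefore itself complete. On the other hand every point of $Y$ is fixed by $N$, so $N\subseteq\Stab(x)$; a subgroup containing the derived subgroup is normal, whence $\Stab(x)$ is normal in $G$ and the orbit $G\cdot x\cong G/\Stab(x)$ carries the structure of a (linear, hence affine) algebraic group. Now $G\cdot x$ is simultaneously complete, affine, and connected, and such a variety must be a single point: a regular function on a connected complete variety is constant, while an affine variety has enough regular functions to separate its points. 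Hence $G\cdot x=\{x\}$, and $x$ is the sought $G$-fixed point.

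I expect the main obstacle to reside in the geometric inputs rather than in the group-theoretic bookkeeping with commutators and normal subgroups. Specifically, the two facts that carry the real weight are that an orbit of minimal dimension is closed (which rests on the constructibility and local closedness of orbits together with the dimension estimate on orbit boundaries) and that the quotient $G/\Stab(x)$ exists as an affine variety. Granting these standard structural results from the theory of algebraic group actions, the decisive and clean step is the dichotomy that a connected variety which is at once complete and affine must be a point.
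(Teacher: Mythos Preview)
The paper does not actually prove Theorem~\ref{thm:BorelFPT}; it is quoted as a background result from the theory of algebraic groups, with references to the textbooks of Humphreys, Milne, and Wallach. There is therefore no ``paper's own proof'' to compare against.

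That said, your argument is correct and is essentially the standard textbook proof (as in Humphreys, \S 21.2, or Borel's book): induct on $\dim G$, use solvability to peel off $[G,G]$, and exploit the fact that a minimal-dimensional orbit is closed together with the complete-versus-affine dichotomy to pin down a fixed point. You are also right to flag that the genuine content lies in the two geometric inputs---closedness of minimal orbits and the existence of $G/\Stab(x)$ as an affine variety---both of which are standard but nontrivial facts from the structure theory of algebraic group actions.
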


Here, a \emph{morphism }between projective varieties is a map given
by homogenous polynomials of the same degree on the projective coordinates.
It is well-known that $GL_{n}$ is an algebraic group acting by morphisms
on projective space, and that this restricts to an action on any projective
variety that is closed under the action \cite[Lecture 10]{Harris:1995}.

In order to apply Theorem~\ref{thm:BorelFPT}, we need a connected
solvable subgroup of $GL_{n}$. Such a subgroup is provided by the
subgroup $B_{n}<GL_{n}$ of all (weakly) upper-triangular matrices.
Moreover, $B_{n}=T_{n}\rtimes U_{n}$, where $T_{n}$ consists of
all diagonal invertible matrices and $U_{n}$ of all upper triangular
matrices with $1$'s on the diagonal. On the other hand, the permutation
matrices also form a subgroup $W_{n}$ of $GL_{n}$, and $W_{n}$
is isomorphic to the symmetric group $S_{n}$. A relationship between
these subgroups is given by $GL_{n}=B_{n}W_{n}B_{n}$.
\begin{rem}
Although we will not need this fact, the maximal connected solvable
subgroups (the so-called \emph{Borel subgroups}) of $GL_{n}$ are
exactly the conjugates of $B_{n}$. We mention also that in the further
theory of linear algebraic and Lie groups, the subgroup $B_{n}$ is
called a \emph{Borel subgroup}, $T_{n}$ is a \emph{maximal torus},
and $W_{n}$ is a \emph{Weyl group}.
\end{rem}

There is a well-known relationship between fixed points of the action
of $B_{n}$ and $T_{n}$ on $\Lambda\cc^{n}$ and combinatorics of
set systems. A family $\mathcal{A}$ of subsets of $\left[n\right]$
is said to be \emph{shifted} if whenever $i>j$ and $S\in\mathcal{A}$
are such that $i\in S$ but $j\notin S$, then also $\left(S\setminus i\right)\cup j\in\mathcal{A}$. 
\begin{prop}[see e.g. \cite{Herzog/Hibi:2011,Miller/Sturmfels:2005}]
\label{prop:TnBnFixed}Let $V$ be a subspace of $\Lambda^{k}\cc^{n}$.
\begin{enumerate}
\item If $V$ is fixed by the action of $T_{n}$, then $V$ has a basis
consisting of monomials. 
\item If $V$ is fixed by the action of $B_{n}$, then $V$ has a basis
consisting of monomials whose supports form a shifted family of sets.
\end{enumerate}
\end{prop}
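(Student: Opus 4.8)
The plan is to treat the two parts in sequence, deriving the second from the first. Throughout I would write $e_S = e_{i_1}\wedge\cdots\wedge e_{i_k}$ for the monomial supported by $S=\{i_1<\cdots<i_k\}$; as $S$ ranges over the $k$-element subsets of $[n]$, these $e_S$ form a basis of $\Lambda^k\cc^n$.

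For part (1), I would first record the $T_n$-action explicitly: a diagonal matrix $t=\operatorname{diag}(t_1,\dots,t_n)$ sends $e_i\mapsto t_i e_i$, hence $t\cdot e_S = \chi_S(t)\, e_S$ where $\chi_S(t)=\prod_{i\in S}t_i$. Thus every monomial is a $T_n$-eigenvector, and the basis $\{e_S\}$ diagonalizes the whole torus simultaneously. The combinatorial input is that the characters $\chi_S$ are pairwise distinct as functions on $T_n$: for $S\neq S'$ the ratio $\chi_S\chi_{S'}^{-1}$ is a nontrivial Laurent monomial, so it is not identically $1$. Consequently there is a single element $t_0\in T_n$ at which all the values $\chi_S(t_0)$ are pairwise distinct --- one may argue that each locus $\{\chi_S=\chi_{S'}\}$ is a proper closed subset of the irreducible variety $T_n$ and take $t_0$ outside their union, or simply take $t_0=\operatorname{diag}(p_1,\dots,p_n)$ for distinct primes $p_i$, so that the products $\prod_{i\in S}p_i$ are distinct squarefree integers. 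The operator $t_0$ acting on $\Lambda^k\cc^n$ is then diagonalizable with pairwise distinct eigenvalues, so by a standard Vandermonde argument any $t_0$-invariant subspace --- in particular $V$ --- is a direct sum of the eigenlines it meets. Each eigenline is $\langle e_S\rangle$, so $V$ is spanned by the monomials it contains, giving the desired monomial basis.

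For part (2), since $T_n\subset B_n$, part (1) already supplies a monomial basis $\{e_S : S\in\mathcal{A}\}$ of $V$ for some family $\mathcal{A}$ of $k$-subsets, and it remains to see that $\mathcal{A}$ is shifted. Given $i>j$ and $S\in\mathcal{A}$ with $i\in S$ and $j\notin S$, I would apply the unipotent element $u=I+cE_{ji}\in U_n\subset B_n$ with $c\neq 0$; as $j<i$ this is genuinely upper triangular, it fixes every $e_m$ with $m\neq i$, and it sends $e_i\mapsto e_i+ce_j$. Expanding the wedge product gives $u\cdot e_S = e_S \pm c\,e_{(S\setminus i)\cup j}$, the sign coming from reordering, and the second monomial is nonzero precisely because $j\notin S$. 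As $V$ is $B_n$-invariant and contains $e_S$, the element $u\cdot e_S$ lies in $V$, hence is a linear combination of the basis monomials $\{e_T:T\in\mathcal{A}\}$. Linear independence of distinct monomials then forces $(S\setminus i)\cup j\in\mathcal{A}$, which is exactly the shifting condition.

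The genuinely load-bearing step is the weight-space decomposition in part (1): everything rests on the monomials being a basis of simultaneous eigenvectors with \emph{distinct} weights, after which the reduction to a single diagonalizable operator with distinct eigenvalues is routine linear algebra. Part (2) is then a short direct computation with a single elementary unipotent, and I do not anticipate any real obstacle there beyond tracking the sign, which is immaterial since only the nonvanishing of the coefficient of $e_{(S\setminus i)\cup j}$ is needed.
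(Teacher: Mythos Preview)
Your proof is correct. Part~(2) is essentially identical to the paper's argument: both apply the elementary unipotent sending $e_i\mapsto e_i+e_j$ and read off the shifted set from the resulting two-term expansion. Part~(1), however, is handled differently. The paper argues iteratively: given a basis element $b$ supported on at least two monomials, it finds some $e_i$ appearing in some but not all of them, applies the diagonal involution $e_i\mapsto -e_i$, and uses the result to replace $b$ by an element with strictly smaller monomial support, terminating by induction. You instead invoke the weight-space decomposition in one stroke, producing a single torus element $t_0$ with pairwise distinct eigenvalues on $\Lambda^k\cc^n$ and concluding that any $t_0$-invariant subspace is a sum of eigenlines. Your argument is the standard representation-theoretic one and generalizes immediately to any torus action with multiplicity-free weights; the paper's version is more elementary (using only a single reflection at each step and no appeal to diagonalizability of restrictions) and has the minor expository advantage that its reduce-the-support mechanism mirrors the shifting arguments that recur later in the paper.
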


\begin{proof}
First, that $V$ is fixed by $T_{n}$ means that we may independently
scale $e_{1},\dots,e_{n}$ and remain in $V$. Now if $V$ has a basis
element $b$ that is the sum of at least two monomials, we may find
an $e_{i}$ that is in some of these monomials but not others. Multiplying
this $e_{i}$ by $-1$ allows us to replace $b$ by an element supported
by a smaller number of monomials. An easy inductive argument gives
that a $T_{n}$-fixed space has a basis of monomials. We remark that
such a basis is obviously unique.

For the second part, since $T_{n}\subseteq B_{n}$, we may assume
that we have a basis consisting of monomials. If $S$ is the support
of a monomial in $V$ with $i\in S$ and $j\notin S$ for $i>j$,
then the matrix $g$ sending $e_{i}$ to $e_{i}+e_{j}$ (and fixing
all other basis elements of $\cc^{n}$) is upper-triangular. Thus
$g\cdot m_{S}=m_{S}+m_{S\setminus i\cup j}$ is also in $V$, and
so $m_{S\setminus i\cup j}$ is in $V$, hence (by uniqueness) in
the monomial basis for $V$. 
\end{proof}

\subsection{\label{subsec:ProjVarieties}Exterior algebras and projective varieties}

The family of all $m$-dimensional subspaces of a vector space $X$
forms a projective variety, the \emph{Grassmannian} $\Gr_{m}(X)$.
The proof proceeds by identifying an $m$-dimensional subspace $Y$
spanned by $y_{1},\dots,y_{m}$ with $y_{1}\wedge\cdots\wedge y_{m}$
(up to a scalar multiple) in $\pp(\Lambda^{m}X)$. That the Grassmannian
is a projective variety follows from showing that the elements of
$\Lambda^{m}X$ that can be written as a product of elements from
$\Lambda^{1}X$ can be identified as the zeros of a system of polynomial
equations \cite{Harris:1995,Humphreys:1975}.

We will consider $m$-dimensional vector subspaces of $\Lambda^{k}\cc^{n}$.
It is perhaps amusing to note that the projective variety $\Gr_{m}(\Lambda^{k}\cc^{n})$
of such subspaces sits in $\pp\left(\Lambda^{m}\left(\Lambda^{k}\cc^{n}\right)\right)$.

It is straightforward to see that the condition in $\Lambda^{k}\cc^{n}$
that $v\wedge w=0$ is given by polynomial equations. This can be
extended to show that the condition that $V\wedge V=0$ yields a subvariety
of $\Gr_{m}(\Lambda^{k}\cc^{n})$, as follows. In \cite[Example 6.19]{Harris:1995},
it is shown that if $X$ is a projective subvariety of $\pp^{n}$,
then the set $\left\{ V\in\Gr_{m}:V\subseteq X\right\} $ is a subvariety.
The proof goes by constructing homogenous polynomial functions $f_{1},\dots,f_{\ell}$
on $\Gr_{m}$ so that if $V\in\Gr_{m}$, then $f_{1}(V),\dots,f_{\ell}(V)$
span $V$. Since the composition of polynomials is a polynomial, it
follows that $\left\{ V\in\Gr_{m}:V\subseteq X\right\} $ is identified
as the zeros of the so-composed polynomials. The same argument on
pairs of elements in the spanning set shows that $\left\{ V\in\Gr_{m}(\Lambda^{k}\cc^{n}):V\wedge V=0\right\} $
is the zero set of a system of polynomials.

\subsection{\label{subsec:EKRforShifted}Erd\H{o}s--Ko--Rado for shifted set
systems}

For $k=n/2$, Theorem~\ref{thm:EKR} is trivial, since a set and
its complement may not both be in $\mathcal{A}$, and as ${2k-1 \choose k-1}=\frac{1}{2}{2k \choose k}$. 

For $k<n/2$, if we make the additional assumption that the family
$\mathcal{A}$ in Theorem~\ref{thm:EKR} is shifted, then the proof
is an easy induction. Decompose $\mathcal{A}$ as the disjoint union
of the family $\st_{\mathcal{A}}n$ consisting of sets in $\mathcal{A}$
with $n$ as an element, and its complement $\del_{\mathcal{A}}n=\mathcal{A}\setminus\st_{\mathcal{A}}n$.
Let $\link_{\mathcal{A}}n=\left\{ A\setminus n:A\in\st_{\mathcal{A}}n\right\} $.
Then $\link_{\mathcal{A}}n$ and $\del_{\mathcal{A}}n$ are clearly
also shifted, and $\del_{\mathcal{A}}n$ is clearly intersecting.

Now if $C,D\in\link_{\mathcal{A}}n$ have $C\cap D=\emptyset$, then
(since $k\leq n/2$) there is some $i\neq n$ in $[n]\setminus(C\cup D)$.
But then $C\cup i$ and $D\cup n$ are nonintersecting sets in $\mathcal{A}$
by shiftedness, a contradiction. It follows that $\link_{\mathcal{A}}n$
is intersecting.

Now by induction, we have $\left|\mathcal{A}\right|=\left|\link_{\mathcal{A}}n\right|+\left|\del_{\mathcal{A}}n\right|\leq{n-2 \choose k-2}+{n-2 \choose k-1}={n-1 \choose k-1}$.

\section{\label{sec:ProofMain}Proof of the main theorem}

Having set up a large amount of algebraic machinery, the proof of
Theorem~\ref{thm:ExtEKR} now follows quickly. Indeed, if the variety
of $V\subseteq\cc^{n}$ of dimension $m$ with $V\wedge V=0$ is nonempty,
then there is a fixed point for the action of $B_{n}$ by Theorem~\ref{thm:BorelFPT},
hence a shifted family of $m$ intersecting $k$-sets by Proposition~\ref{prop:TnBnFixed}.
That $m\leq{n-1 \choose k-1}$ now follows by the Erd\H{o}s--Ko--Rado
Theorem for shifted set systems (as in Section~\ref{subsec:EKRforShifted}).

\section{\label{sec:Shifting}Shifting and limits of algebraic group actions}

\subsection{\label{subsec:LimitingMatrixGp}Generalizing combinatorial shifting
via limits of matrix group actions}

Combinatorial shifting may be realized via limits of actions of matrix
subgroups, as we describe below in Lemma~\ref{lem:GeomShiftExtsubspace},
Proposition~\ref{prop:ShiftedFromLimiting}, and the surrounding
discussion. A similar relationship in a somewhat different setting
was previously discussed by Knutson \cite[Section 3]{Knutson:2014UNP},
as we will review. A completely different take on the relationship
between combinatorial shifting and algebra is given by Murai and Hibi
\cite[Section 2]{Murai/Hibi:2009}.

We consider the parametrized family of linear transformations $M_{ij}(t)$
given by the matrix that is $1$ on the diagonal, $t$ at the $j,i$
entry, and $0$ elsewhere. Indeed, $M_{ij}(t)$ is an injective homomorphism
$\cc^{+}\to GL_{n}$. 
\begin{rem}
Similar homomorphisms are referred to as \emph{one-parameter subgroups}
in the Lie algebra literature. However, we caution that the algebraic
geometry and algebraic groups literature tends to reserve this term
for homomorphisms from the multiplicative group of $\cc$ (rather
than from the additive group). As a result, we avoid the term.
\end{rem}

The action of $M_{ij}(t)$ on an element $v$ in a projective variety
has a limiting value $\lim_{t\to\infty}M_{ij}(t)\cdot v$. As $M_{ij}(s)\cdot\lim_{t\to\infty}M_{ij}(t)\cdot v=\lim_{t\to\infty}M(s+t)\cdot v=\lim_{t\to\infty}M(t)\cdot v$,
the limit point is preserved under the action by $M_{ij}(t)$.

We consider the limiting behavior of $M_{ij}(t)$ first on $\pp(\cc^{n})$,
and then extend to related varieties. The interesting behavior for
the action on $\pp(\cc^{n})$ occurs in the action on $e_{i}$, which
is sent to 
\[
e_{i}+te_{j}\sim\frac{1}{t}e_{i}+e_{j}\to0+e_{j}.
\]
Similar rescaling arguments show that the limiting action fixes the
hyperplane consisting of all vectors with zero $e_{i}$ component,
and sends all other points to $e_{j}$. 

We now extend to the action on $\Gr_{m}(\cc^{n})$. The action of
$M_{ij}(t)$ on each vector is as in the preceding paragraph. But
we notice that if a subspace $V$ contains (for example) both $e_{i}$
and $e_{j}$, then a Gaussian elimination argument gives that $M_{ij}(t)\cdot V=V$.
If $V$ contains $e_{i}$ and not $e_{j}$, such an elimination cannot
be carried out, and $\lim_{t\to\infty}M_{ij}(t)\cdot V$ replaces
$e_{i}$ with $e_{j}$ in a basis for $V$. More generally:
\begin{lem}[{Knutson \cite[Lemma 3.4]{Knutson:2014UNP}}]
\label{lem:GeomShiftVS} If $V$ is an $m$-dimensional subspace
of $\cc^{n}$ (i.e., $V\in\Gr_{m}(\cc^{n}))$, then 
\[
\lim_{t\to\infty}M_{ij}(t)\cdot V=\begin{cases}
V & \text{if }V\subseteq\left\langle e_{1},\dots,\hat{e}_{i},\dots,e_{n}\right\rangle \text{ or }e_{j}\in V,\\
\left(V\cap\left\langle e_{1},\dots,\hat{e}_{i},\dots,e_{n}\right\rangle \right)\oplus\left\langle e_{j}\right\rangle  & \text{otherwise.}
\end{cases}
\]
\end{lem}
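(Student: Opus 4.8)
The plan is to reduce the statement to a Gaussian-elimination bookkeeping argument, working in a well-chosen basis for $V$. First I would split into the three cases indicated by the formula. If $e_j \in V$, then since $M_{ij}(t)$ fixes $e_j$ and sends $e_i \mapsto e_i + t e_j$, the reduction $M_{ij}(t) \cdot v = v + (\text{scalar}) \, t \, e_j$ stays inside $V$ for every $t$ (using $e_j \in V$), so $M_{ij}(t) \cdot V = V$ for all $t$ and the limit is $V$. Similarly, if $V \subseteq \langle e_1, \dots, \hat e_i, \dots, e_n \rangle$, then no vector of $V$ has a nonzero $e_i$-component, so $M_{ij}(t)$ acts trivially on $V$ and again the limit is $V$. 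These two cases are routine.

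The substance is the ``otherwise'' case, where $e_j \notin V$ and some vector of $V$ has a nonzero $e_i$-component. Here I would choose a basis $v_1, \dots, v_m$ for $V$ adapted to the $e_i$-coordinate: arrange that $v_2, \dots, v_m$ all have zero $e_i$-component (so they lie in $\langle e_1, \dots, \hat e_i, \dots, e_n\rangle$) while $v_1 = e_i + w$ with $w$ having zero $e_i$-component. This is possible by a single elimination step among the basis vectors. The subspace $W := V \cap \langle e_1, \dots, \hat e_i, \dots, e_n\rangle$ is then $(m-1)$-dimensional with basis $v_2, \dots, v_m$, and $V = W \oplus \langle v_1 \rangle$. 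Now $M_{ij}(t)$ fixes each $v_\ell$ for $\ell \geq 2$ and sends $v_1 = e_i + w \mapsto e_i + w + t e_j$. In $\pp$-coordinates, the transformed span has basis $v_2, \dots, v_m$ together with $\frac{1}{t}(e_i + w) + e_j$, and as $t \to \infty$ this last vector tends to $e_j$. Hence $\lim_{t\to\infty} M_{ij}(t) \cdot V = \langle v_2, \dots, v_m \rangle \oplus \langle e_j \rangle = W \oplus \langle e_j \rangle$, as claimed. To make the limit assertion rigorous I would note that the limit is taken in the Grassmannian $\Gr_m(\cc^n)$, a projective variety, and that the Plücker coordinate $v_2 \wedge \cdots \wedge v_m \wedge (e_i + w + t e_j)$ of $M_{ij}(t) \cdot V$ is, after dividing through by its top-degree-in-$t$ term, a vector of rational functions of $t$ whose $t \to \infty$ limit is exactly the Plücker coordinate of $W \oplus \langle e_j \rangle$; one must check this limiting Plücker vector is nonzero, which holds precisely because $e_j \notin W$ (equivalently $e_j \notin V$), so that $v_2, \dots, v_m, e_j$ are linearly independent.

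I expect the main obstacle to be purely organizational rather than deep: one must verify that the ``adapted basis'' can always be chosen and that the case hypotheses ($e_j \notin V$ and $V \not\subseteq \langle e_1, \dots, \hat e_i, \dots, e_n\rangle$) are exactly what guarantee both that the elimination produces a genuinely nonzero $e_i$-component in $v_1$ and that $e_j \notin W$, so the limiting subspace has the right dimension $m$. A small subtlety worth spelling out is that the cases are not disjoint as stated (if $e_j \in V$ and also $V \subseteq \langle e_1,\dots,\hat e_i,\dots,e_n\rangle$, both clauses of the first case apply), but both give the same answer $V$, so no inconsistency arises. Everything else is the rescaling computation $e_i + t e_j \sim \tfrac1t e_i + e_j \to e_j$ already displayed in the text, lifted from $\pp(\cc^n)$ to the Grassmannian via Plücker embedding.
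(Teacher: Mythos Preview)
Your proposal is correct and follows essentially the same Gaussian-elimination-plus-rescaling approach as the paper's proof, just organized in a different order (you treat the $e_j\in V$ case first, while the paper derives it last by noting that the sum $(V\cap\langle e_1,\dots,\hat e_i,\dots,e_n\rangle)+\langle e_j\rangle$ fails to be $m$-dimensional precisely when $e_j\in V$). Your explicit Pl\"ucker-coordinate computation for the limit is more detailed than the paper's terser dimension-count justification, but the underlying argument is the same.
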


\begin{proof}
It is obvious by the preceding discussion that if $V\subseteq\left\langle e_{1},\dots,\hat{e}_{i},\dots,e_{n}\right\rangle $
then $V$ is fixed in the limit of the action, and that otherwise
$e_{j}$ and $V\cap\left\langle e_{1},\dots,\hat{e}_{i},\dots,e_{n}\right\rangle $
are contained in $\lim_{t\to\infty}M_{ij}(t)\cdot V$. If $V\cap\left\langle e_{1},\dots,\hat{e}_{i},\dots,e_{n}\right\rangle +e_{j}$
is $m$-dimensional, then this characterizes $\lim_{t\to\infty}M_{ij}(t)\cdot V$.

Otherwise, we have $e_{j}\in V$. In this case, we can reduce $e_{i}+te_{j}$
to $e_{i}$ in each $M_{ij}(t)\cdot V$, so that $M_{ij}(t)\cdot V=V$
for each value of $t$. The result follows.
\end{proof}
The situation of Lemma~\ref{lem:GeomShiftVS} is not quite what we
are interested in. Rather, we are interested in the limit action induced
on $\pp(\Lambda^{k}\cc^{n})$, and on $\Gr_{m}(\Lambda^{k}\cc^{n})$.
The action on $\pp(\Lambda^{k}\cc^{n})$ should be clear. For ease
of notation, we consider the action of $M_{21}(t)$. Consider an element
of the form $x=e_{2}\wedge v+e_{1}\wedge e_{2}\wedge w+u$, where
$v$ and $w$ are in the subalgebra $\Lambda\left\langle e_{3},\dots,e_{n}\right\rangle $,
and $u$ is in $\Lambda\left\langle e_{1},e_{3}\dots,e_{n}\right\rangle $.
The transformation $M_{21}(t)$ sends $x$ to $(e_{2}+te_{1})\wedge v+e_{1}\wedge e_{2}\wedge w+u$.
In the limit and after renormalizing, this converges to $e_{2}\wedge v$
if that term is nonzero, and to $e_{1}\wedge e_{2}\wedge w+u$ otherwise. 

The limiting action on $\Gr_{m}(\Lambda^{k}\cc^{n})$ is induced from
that on $\pp(\Lambda^{k}\cc^{n})$ in a similar manner to that of
Lemma~\ref{lem:GeomShiftVS}.
\begin{lem}
\label{lem:GeomShiftExtsubspace}Let $V$ be an $m$-dimensional subspace
of $\Lambda^{k}\cc^{n}$ (i.e., $V\in\Gr_{m}(\Lambda^{k}\cc^{n})$),
and let $\varphi:V\to\Lambda^{k}\cc^{n}$ be the (singular) linear
map sending monomials of the form $e_{i}\wedge v$ to $e_{j}\wedge v$,
all others to $0$. Then 
\[
\lim_{t\to\infty}M_{ij}(t)\cdot V=\varphi(V)+\varphi^{-1}(V\cap\varphi(V)).
\]
 Notice that $V\cap\Lambda^{k}\left\langle e_{1},\dots,\hat{e}_{i},\dots,e_{n}\right\rangle \subseteq\varphi^{-1}(0)$.
\end{lem}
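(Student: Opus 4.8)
The plan is to reduce the lemma to an elementary fact about a single square-zero operator, and then finish with a dimension count.

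First I would extend $\varphi$ to the linear operator $\Phi$ on all of $\Lambda^{k}\cc^{n}$ defined on the monomial basis by the same recipe (so that $\varphi=\Phi|_{V}$), and record how $M_{ij}(t)$ acts. A monomial not divisible by $e_{i}$ is fixed by $M_{ij}(t)$; a monomial $e_{i}\wedge e_{j}\wedge r$ divisible by both $e_{i}$ and $e_{j}$ is also fixed, since the term $t\,e_{j}\wedge e_{j}\wedge r$ introduced by $e_{i}\mapsto e_{i}+te_{j}$ vanishes; and a monomial $e_{i}\wedge v$ with $v\in\Lambda^{k-1}\langle e_{\ell}:\ell\neq i,j\rangle$ is sent to $e_{i}\wedge v+t\,e_{j}\wedge v$. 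By linearity this says exactly that $M_{ij}(t)=I+t\Phi$ on $\Lambda^{k}\cc^{n}$. Moreover $\Phi$ lands in the span of monomials divisible by $e_{j}$, on which $\Phi$ vanishes, so $\Phi^{2}=0$; in particular the limiting action on $\pp(\Lambda^{k}\cc^{n})$ sends $[x]$ to $[\Phi(x)]$ when $\Phi(x)\neq 0$ and fixes $[x]$ otherwise, matching the computation preceding the lemma.

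Next, writing $L:=\lim_{t\to\infty}M_{ij}(t)\cdot V\in\Gr_{m}(\Lambda^{k}\cc^{n})$ (the limit exists by the earlier discussion of limits in projective varieties), I would show that $L$ contains both summands. If $w=\varphi(v)\neq 0$ with $v\in V$, then $M_{ij}(t)\cdot v=v+t\,\varphi(v)$ lies in $M_{ij}(t)\cdot V$ while $[v+t\,\varphi(v)]\to[w]$, so $w\in L$; hence $\varphi(V)\subseteq L$. If instead $v\in V$ satisfies $\varphi(v)\in V$ --- which is precisely the condition defining $\varphi^{-1}(V\cap\varphi(V))$, since $\varphi(v)\in\varphi(V)$ automatically --- then the subspace $M_{ij}(t)\cdot V$ contains both $M_{ij}(t)\cdot v=v+t\,\varphi(v)$ and $M_{ij}(t)\cdot\varphi(v)=\varphi(v)$ (using $\Phi^{2}=0$), hence contains $v$ for every $t$, so $v\in L$. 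Therefore $\varphi(V)+\varphi^{-1}(V\cap\varphi(V))\subseteq L$. Since $\dim L=m$, it then suffices to check $\dim\bigl(\varphi(V)+\varphi^{-1}(V\cap\varphi(V))\bigr)=m$. Put $K=\ker(\varphi|_{V})=V\cap\varphi^{-1}(0)$ and $V'=\varphi^{-1}(V\cap\varphi(V))=\{v\in V:\Phi(v)\in V\}$. Rank-nullity gives $\dim\varphi(V)=m-\dim K$; the operator $\Phi$ maps $V'$ onto $\varphi(V)\cap V$ with kernel $K$, so $\dim V'=\dim K+\dim(\varphi(V)\cap V)$; and because $\varphi(V)=\Phi(V)\subseteq\operatorname{im}\Phi\subseteq\ker\Phi$, every vector of $\varphi(V)\cap V$ already lies in $V'$, whence $\varphi(V)\cap V'=\varphi(V)\cap V$. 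Substituting into $\dim(\varphi(V)+V')=\dim\varphi(V)+\dim V'-\dim(\varphi(V)\cap V')$ yields $m$, completing the proof. The closing assertion is then immediate: $\varphi$ annihilates every monomial not divisible by $e_{i}$, and $\Lambda^{k}\langle e_{1},\dots,\hat{e}_{i},\dots,e_{n}\rangle$ is spanned by such monomials.

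The one delicate ingredient is the bookkeeping in the dimension count, and specifically the fact that $\Phi$ kills its own image; this is exactly what forces $\varphi(V)\cap V\subseteq V'$ and makes the count close up, whereas the rest is a mechanical unwinding of $M_{ij}(t)=I+t\Phi$. It is worth noting that this argument proves more generally that $\lim_{t\to\infty}(I+t\Phi)\cdot V=\Phi(V)+\Phi^{-1}(V\cap\Phi(V))$ for any square-zero operator $\Phi$ and any finite-dimensional subspace $V$, with Lemma~\ref{lem:GeomShiftVS} the rank-one special case.
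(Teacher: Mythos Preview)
Your proof is correct and follows the same two-step skeleton as the paper: show that both summands $\varphi(V)$ and $\varphi^{-1}(V\cap\varphi(V))$ sit inside the limit subspace, then finish by checking that their sum already has dimension $m$. The ``row-reduction'' argument you use to place $V'$ inside the limit is exactly the paper's.

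Where your write-up differs is in packaging. You observe at the outset that $M_{ij}(t)=I+t\Phi$ on $\Lambda^{k}\cc^{n}$ with $\Phi^{2}=0$, which lets you run the entire argument for an arbitrary square-zero operator; the paper instead works coordinate-wise with the decomposition $x=e_{2}\wedge v+e_{1}\wedge e_{2}\wedge w+u$. Your formulation makes Lemma~\ref{lem:GeomShiftVS} visibly the rank-one special case, and it also makes the dimension count cleaner: you compute $\varphi(V)\cap V'=\varphi(V)\cap V$ explicitly via $\operatorname{im}\Phi\subseteq\ker\Phi$, whereas the paper asserts that this intersection equals $\varphi^{-1}(0)$, which is not literally correct in general (elements of $\ker\varphi$ need not lie in $\varphi(V)$) even though the intended conclusion is the same. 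So your version is both a mild generalization and a tightening of the bookkeeping.
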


\begin{proof}
Without loss of generality, assume $i=2$ and $j=1$. As in Lemma~\ref{lem:GeomShiftVS},
it is clear from the discussion of the action of $M_{21}(t)$ on $\pp(\Lambda^{k}\cc^{n})$
that $\varphi(V)$ is contained in $\lim_{t\to\infty}M_{21}(t)\cdot V$.
An element is in $V\cap\varphi(V)$ when it is of the form $e_{1}\wedge v$,
and is $\varphi(e_{2}\wedge v+y)$ for some $y\in\Lambda^{k}\left\langle e_{1},e_{3},\dots,e_{n}\right\rangle +e_{1}\wedge e_{2}\wedge\Lambda^{k}\left\langle e_{3},\dots,e_{n}\right\rangle $.
In this situation, $M_{21}(t)\cdot(e_{1}\wedge v+y)=e_{1}\wedge v+te_{2}\wedge v+y$,
and we can use the $e_{1}\wedge v$ element of $V$ to ``row-reduce''
to $e_{2}\wedge v+y$. Thus, the right-hand side is contained in the
left-hand side.

We now notice that, since $\varphi^{2}$ is the zero map, the intersection
between the two terms in the right-hand sum is $\varphi^{-1}(0)$.
It now follows from elementary linear algebra  that the dimension
of the sum on the right-hand side is $m$, completing the proof.
\end{proof}
Recall that the \emph{combinatorial shift} $S_{ij}$ of a set system
$\mathcal{A}$ replaces each set $A\in\mathcal{A}$ containing $i$
with $\left(A\setminus i\right)\cup j$ if the latter set is not already
present, and leaves $A$ alone otherwise. The original proof of Theorem~\ref{thm:EKR}
was by combinatorial shifting, and the technique has seen much use
since; see \cite{Frankl:1987} for a survey. It follows immediately
from Lemma~\ref{lem:GeomShiftExtsubspace} that if $V$ has a basis
of monomials supported by the set system $\mathcal{A}$, then $\lim_{t\to\infty}M_{ij}(t)\cdot V$
is supported by $S_{ij}(\mathcal{A})$. Thus, combinatorial shifting
of a set system is realized by a limiting action of an algebraic group.

Conversely, we have the following. 
\begin{prop}
\label{prop:ShiftedFromLimiting}Let $V$ be a subspace of $\Lambda^{k}\cc^{n}$.
\begin{enumerate}
\item If $V=\lim_{t\to\infty}M_{ij}(t)\cdot V$ for some given $i,j$, then
$V$ is fixed by the action of $M_{ij}(t)$.
\item If $V=\lim_{t\to\infty}M_{ij}(t)$ for all $i>j$, then $V$ is fixed
by the action of $B_{n}$.
\end{enumerate}
\end{prop}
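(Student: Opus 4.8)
The plan is to prove part (1) first and then deduce part (2) as a straightforward consequence, using the Bruhat-type decomposition $B_n = T_n \rtimes U_n$ together with the fact that $U_n$ is generated by the elementary matrices $M_{ij}(t)$ with $i>j$.

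For part (1): suppose $V = \lim_{t\to\infty} M_{ij}(t)\cdot V$. I would work with the monomial structure coming from Lemma~\ref{lem:GeomShiftExtsubspace}. First, note that the limit subspace $W := \lim_{t\to\infty} M_{ij}(t)\cdot V$ is, by the discussion preceding and following that lemma (or by iterating the limit argument on $\pp(\cc^n)$), fixed by the action of $M_{ij}(t)$ for all $t$ --- this was already observed in the paragraph introducing the limiting action, since $M_{ij}(s)\cdot \lim_t M_{ij}(t)\cdot v = \lim_t M_{ij}(t)\cdot v$. Hence if $V = W$, then $V$ is fixed by $M_{ij}(t)$ for all $t$. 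So part (1) is essentially immediate from material already in the excerpt; the only thing to check carefully is that the limit on $\Gr_m$ is genuinely $M_{ij}(t)$-invariant, which follows from the displayed semigroup computation applied componentwise to a basis, or directly from the explicit description in Lemma~\ref{lem:GeomShiftExtsubspace} (applying $\varphi$ again does nothing new since $\varphi^2=0$, and $e_j \in \varphi(V) \subseteq W$ lets one carry out the row reduction).

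For part (2): by hypothesis $V$ is fixed by $M_{ij}(t)$ for every $i>j$ and every $t$, hence $V$ is fixed by the subgroup of $GL_n$ generated by all such $M_{ij}(t)$, which is exactly $U_n$ (the upper unitriangular group). It remains to see that $V$ is also fixed by the torus $T_n$. For this I would argue that $U_n$-fixedness already forces $V$ to be $T_n$-fixed: apply Lemma~\ref{lem:GeomShiftExtsubspace} with the roles reversed, or more simply observe that $V$ being fixed by all $M_{ij}(t)$ with $i>j$ means in particular $\lim_{t\to\infty} M_{ij}(t)\cdot V = V$ for all $i>j$, so by the shifting interpretation $V$ is supported by a shifted set system and in particular has a monomial basis, whence it is $T_n$-fixed by Proposition~\ref{prop:TnBnFixed}(1). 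Actually the cleanest route is: a $U_n$-stable subspace of a $GL_n$-representation is automatically stable under the normalizer considerations needed, but to avoid invoking outside representation theory I would instead directly show $V$ has a monomial basis from $U_n$-invariance. Concretely, one shows that if $V$ is $U_n$-fixed then $\lim M_{ij}(t) \cdot V = V$ gives, via Lemma~\ref{lem:GeomShiftExtsubspace}, that the reverse shift operators also fix $V$, and combining forward and backward shifts pins down a monomial basis; once $V$ has a monomial basis, $T_n$-invariance is automatic, so $V$ is fixed by $T_n \rtimes U_n = B_n$.

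The main obstacle is part (2), specifically the step deducing $T_n$-invariance from $U_n$-invariance: naively, being fixed by the unipotent radical does not obviously give torus-invariance without some input. I expect the clean resolution is to show directly that $U_n$-invariance forces a monomial basis. The point is that for a subspace fixed by $M_{ij}(t)$ for all $t$ and all $i > j$, one can run the limiting argument of Lemma~\ref{lem:GeomShiftExtsubspace} and conclude that $V$ equals its own iterated shift, which for a generic choice of a regular $1$-parameter degeneration collapses any non-monomial basis element to a monomial; alternatively, pick a generic element $u = \prod M_{ij}(t_{ij}) \in U_n$ and observe that $u$-fixedness plus $\lim_{t\to\infty}$-fixedness of a full generic degeneration inside $U_n$ forces the Plücker coordinates of $V$ to be supported on an order ideal, hence $V$ is $T_n$-fixed. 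I would present this as the key lemma, then assemble $B_n = T_n U_n$ to finish.
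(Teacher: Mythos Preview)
Your treatment of part~(1) is correct and matches the paper: the limit point of a one-parameter action on a projective variety is fixed by that action, and this is exactly what the paper invokes.

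For part~(2), you have the right skeleton---part~(1) gives $U_n$-invariance, and what remains is $T_n$-invariance, equivalently a monomial basis---but none of your proposed routes to the monomial basis actually lands. The suggestion that ``by the shifting interpretation $V$ is supported by a shifted set system and in particular has a monomial basis'' is circular: the shifting interpretation of Lemma~\ref{lem:GeomShiftExtsubspace} only says anything about set systems once you already \emph{have} a monomial basis. The ``reverse shift operators'' and ``generic degeneration / Pl\"ucker coordinates on an order ideal'' ideas are gestures rather than arguments; you have not said what cancels what, and a $U_n$-fixed subspace need not a priori have Pl\"ucker support of that shape without further work.

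The paper fills this gap with a concrete reduction that you are close to but do not state. From $V=\lim_{t\to\infty}M_{ij}(t)\cdot V$ and Lemma~\ref{lem:GeomShiftExtsubspace} one gets $\varphi(V)\subseteq V$, where $\varphi$ is the singular map $e_i\wedge v\mapsto e_j\wedge v$ (and zero on the rest). Now if some $b\in V$ is supported on at least two monomials, choose $i>j$ so that one monomial of $b$ contains $e_i$ but not $e_j$ and another contains $e_j$ but not $e_i$ (possible since two distinct $k$-subsets differ in both directions). Then $\varphi(b)$ is a nonzero element of $V$ supported on strictly fewer monomials than $b$, and it can be used to reduce $b$. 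Iterating gives a monomial basis, hence $T_n$-invariance, hence $B_n$-invariance. This is the missing ingredient; once you supply it, your outline becomes the paper's proof.
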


\begin{proof}
The fixed point behavior of (1) holds for any action of $M_{ij}(t)$
on a projective variety.

In the situation of (2), it follows from (1) that $V$ is fixed under
all upper triangular matrices with $1$'s on the diagonal. It remains
to show that $V$ is fixed by diagonal matrices. As projective monomials
are fixed by diagonal matrices, this is equivalent by Proposition~\ref{prop:TnBnFixed}
to showing that $V$ has a basis of monomials. But if $V$ has a basis
element $b$ that is supported by at least two monomials, then we
may find $i>j$ so that some monomials contain $e_{i}$ but not $e_{j}$
and vice-versa. Then $\varphi(b)\neq0$ is in $0$, has a smaller
support, and can be used to reduced $b$. A straightforward induction
gives that $V$ is generated by monomials, as desired.
\end{proof}
We see a variant on the algebraic groups-based proof of Theorem~\ref{thm:ExtEKR},
as follows. By Proposition~\ref{prop:ShiftedFromLimiting}~(2),
repeatedly applying limiting actions of $M_{ij}(t)$ for $i>j$ yields
a fixed point of the action of $B_{n}$. Now Proposition~\ref{prop:TnBnFixed}
and Section~\ref{subsec:EKRforShifted} give the desired result.

\subsection{\label{subsec:RelateAlgShift}Diagonal matrix actions, with a relationship
to algebraic shifting}

Another technique that has been used for proving Erd\H{o}s--Ko--Rado
type theorems \cite{Fakhari:2017,Woodroofe:2011a} is that of algebraic
shifting. Algebraic shifting uses generic initial ideal techniques
(related to Gröbner bases) to produce a shifted set system from a
set system, and indeed, a shifted simplicial complex from a simplicial
complex. An overview may be found in \cite{Kalai:2002} or in \cite{Herzog/Hibi:2011}.
The connection between algebra and shiftedness again comes from Borel-fixed
ideals, although the Borel-fixed property does not directly arise
from a group action in the typical presentation of this material.

Algebraic shifting has excellent theoretical properties, but it is
not so easy to make computations with it. In comparison, Theorem~\ref{thm:BorelFPT}
allows relatively direct examination of orbits, so long as they can
be grouped together into varieties. 

It is well-known to experts in the field that it is also possible
to describe algebraic shifting via limiting actions of $GL_{n}$.
We briefly survey this approach, as it doesn't seem to be as broadly
known as it deserves. Consider the diagonal matrix $N(t)$ with entries
$t^{-2^{1}},t^{-2^{2}},\dots,t^{-2^{n}}$. Thus, the action of $N(t)$
on $\Lambda\cc^{n}$ weights each of the $2^{n}$ monomials of $\Lambda\cc^{n}$
by a distinct power of $t$, where the powers of $t$ arise from the
standard bijection between subsets of $[n]$ and binaries sequences
of length $n$. It is clear that lexicographically earlier subsets
have a higher weighting.

An entirely similar argument to those in the previous section (via
projective rescaling) yields that for $v\in\pp(\Lambda\cc^{n})$,
we have $\lim_{t\to\infty}N(t)\cdot v$ to be the monomial in $v$
whose support is lexicographically earliest. We call this monomial
the \emph{initial monomial} of $v$. 
\begin{rem}
Similar ideas are studied in the commutative algebra literature under
the name of \emph{initial ideals}. We refer the reader to e.g. \cite{Herzog/Hibi:2011,Miller/Sturmfels:2005}
for an overview. 
\end{rem}

Applying similar arguments to a vector space, we obtain:
\begin{lem}
\label{lem:limitingtorus}If $V$ is an $m$-dimensional subspace
of $\Lambda^{k}\cc^{n}$ (i.e., $V\in\Gr_{m}(\Lambda^{k}\cc^{n}))$,
then $\lim_{t\to\infty}N(t)\cdot V$ is the subspace $\init(V)$ generated
by the initial monomials of a basis for $V$.
\end{lem}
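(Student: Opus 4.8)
\emph{Proof strategy.}
The plan is to carry out the argument of Lemmas~\ref{lem:GeomShiftVS} and~\ref{lem:GeomShiftExtsubspace} with the unipotent element $M_{ij}(t)$ replaced by the diagonal element $N(t)$, using the fact recalled above that $\lim_{t\to\infty}N(t)\cdot v$ is the initial monomial $\init(v)$ for $v\in\pp(\Lambda^k\cc^n)$. Write $e(\mu)$ for the nonnegative integer with $N(t)\cdot\mu=t^{-e(\mu)}\mu$ for a monomial $\mu$; these values are pairwise distinct, and $\init(v)$ is the monomial appearing in $v$ with the smallest value of $e(\cdot)$.

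First I would replace the given basis of $V$ by a ``reduced'' one. Starting from an arbitrary basis and repeatedly subtracting a scalar multiple of one basis vector from another whenever two of them have the same initial monomial --- a finite process, exactly as in computing a row echelon form or a Gr\"obner basis --- one obtains a basis $b_1,\dots,b_m$ of $V$ whose initial monomials $\mu_\ell=\init(b_\ell)$ are pairwise distinct. After rescaling, the coefficient of $\mu_\ell$ in $b_\ell$ is $1$, and then $e(\mu_\ell)<e(\mu)$ for every other monomial $\mu$ occurring in $b_\ell$. That the monomial subspace $\langle\mu_1,\dots,\mu_m\rangle$ is independent of the reduction --- so that ``$\init(V)$'' is well defined --- is the standard observation that $\{\init(v):0\ne v\in V\}$ is exactly the set of monomials lying in it; alternatively, one may simply take such a reduced basis as the meaning of ``a basis for $V$'' in the statement.

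The heart of the proof is the limit computation. Setting $d_\ell=e(\mu_\ell)$,
\[
\widetilde b_\ell(t):=t^{d_\ell}\,\bigl(N(t)\cdot b_\ell\bigr)=\mu_\ell+\sum_{\mu\ne\mu_\ell}c_{\ell,\mu}\,t^{\,d_\ell-e(\mu)}\mu\ \longrightarrow\ \mu_\ell\qquad(t\to\infty),
\]
since every exponent $d_\ell-e(\mu)$ is negative. Rescaling individual basis vectors does not change their span, so $N(t)\cdot V=\langle\widetilde b_1(t),\dots,\widetilde b_m(t)\rangle$, and the corresponding point of $\Gr_m(\Lambda^k\cc^n)\subseteq\pp\bigl(\Lambda^m(\Lambda^k\cc^n)\bigr)$ is represented by the Pl\"ucker vector $\widetilde b_1(t)\wedge\cdots\wedge\widetilde b_m(t)$. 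Because wedging is continuous, this converges to $\mu_1\wedge\cdots\wedge\mu_m\neq 0$ (distinct monomials being linearly independent). Hence the limit of $V$ in the Grassmannian exists and equals $\langle\mu_1,\dots,\mu_m\rangle=\init(V)$.

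The step I expect to require the most care is the juggling of the two normalizations: the ``naive'' limit of $N(t)\cdot b_\ell$ inside $\Lambda^k\cc^n$ is $0$, so each $b_\ell$ must be rescaled by its own power $t^{d_\ell}$ before passing to the limit, after which one uses that a point of the Grassmannian is unaffected by such rescalings. The remaining ingredients --- termination of the reduction, well-definedness of $\init(V)$, and continuity of the Pl\"ucker embedding --- are routine.
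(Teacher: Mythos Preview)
Your proposal is correct and follows essentially the same route as the paper: reduce to a basis with pairwise distinct initial monomials and use that $N(t)$ sends each basis vector (projectively) to its initial monomial. The only cosmetic difference is the final step---the paper argues by containment $\init(V)\subseteq\lim_{t\to\infty}N(t)\cdot V$ plus a dimension count, while you compute the limiting Pl\"ucker point directly; your version is more explicit but not genuinely different.
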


\begin{proof}
It follows from the above discussion that $\init(V)\subseteq\lim_{t\to\infty}N(t)\cdot V$.
Now straightforward linear algebra gives that $\init(V)$ is spanned
by the initial monomials of a basis for $V$, giving that $\init(V)$
is $m$-dimensional. The result follows.
\end{proof}
As the framework of algebraic shifting is based upon taking an initial
ideal with respect to a generic basis, Lemma~\ref{lem:limitingtorus}
and similar results can be used to give a description of algebraic
shifting from the algebraic groups perspective. 

\section{\label{sec:TowardsAlgHM}Towards an exterior analogue of the Hilton--Milner
Theorem}

Having given an algebraic groups-based proof of Theorem~\ref{thm:EKR},
it would be interesting to give a similar proof of Theorem~\ref{thm:HiltonMilner}.
Indeed, it is natural to ask the following question:
\begin{question}
\label{que:ExteriorHM}Let $V$ be a subspace of $\Lambda^{k}\cc^{n}$
satisfying $V\wedge V=0$. If the dimension of $V$ is ${n-1 \choose k-1}$
(or possibly larger than ${n-1 \choose k-1}-{n-k-1 \choose k-1}+1$)),
then must there be an $a\in\Lambda^{1}\cc^{n}\cong\cc^{n}$ so that
$a\wedge V=0$?
\end{question}

Scott and Wilmer also ask the ${n-1 \choose k-1}$ case of Question~\ref{que:ExteriorHM}
in \cite[Section 2.2]{Scott/Wilmer:2021}.

It is worthwhile to remark that, by Lemma~\ref{lem:Dibag}, it is
equivalent to ask whether there is a common linear factor\emph{ }of
$V$. That is, is there (under the conditions of the question) a fixed
$a\in\cc^{n}$ so that every $v\in V$ may be written as $a\wedge w$
for some $w\in\Lambda^{k-1}\cc^{n}$? 

A natural approach to this question is to try to imitate the argument
of \cite{Draisma/Kraft/Kuttler:2006}, possibly leavened with the
shifting-based proofs of Theorem~\ref{thm:HiltonMilner} \cite{Frankl:1987,Frankl/Furedi:1986}.
A key step of the approach in \cite{Draisma/Kraft/Kuttler:2006} is
to choose the basis with respect to which our matrices are upper-triangular.
Their argument proceeds by showing that every maximum dimensional
vector space of nilpotent matrices (or more generally Lie algebra
elements) contains a matrix which is upper triangular with respect
to a unique choice of basis. This is done by showing that the set
of matrices that are upper triangular with respect to multiple bases
form an algebraically closed set, and applying Theorem~\ref{thm:BorelFPT}
to get a contradiction. 

The analogue would be to show that under some additional condition,
the space $V$ of Theorem~\ref{thm:EKR} has an element with a unique
linear factor. Indeed, it appears likely that an analogue of the argument
of \cite{Draisma/Kraft/Kuttler:2006} can be used to show that the
subspace of $\Lambda^{k}\cc^{n}$ consisting of the elements having
more than one linear factor is closed. 

One obstacle to following this path is that there are spaces $V$
with $V\wedge V=0$ but which have many elements with no linear factor.
Indeed, one can find such a $V$ that is spanned by elements each
of which has no linear factor!
\begin{example}
\label{exa:CrossExteriorExample}Let $k$ be odd, and let $n=2k$.
Let $\mathcal{A}$ be the set of all $k$-subsets of $[n]$ containing
$1$. Then $\mathcal{A}$ is obviously an intersecting family. It
is easy to see that the family of complements of the sets in $\mathcal{A}$
also forms an intersecting family. Let $V$ be spanned by the monomials
$m_{A}+m_{A^{c}}$ over $A\in\mathcal{A}$. (Thus, for $k=3$, one
such monomial is $e_{1}\wedge e_{2}\wedge e_{3}+e_{4}\wedge e_{5}\wedge e_{6}$.)
Now since $k$ is odd, the exterior product square of any such element
is $0$, while the product of $m_{A}+m_{A^{c}}$ and $m_{B}+m_{B^{c}}$
is $0$ by an intersection argument. Now the multiplication map $\wedge(m_{A}+m_{A^{c}})$
sends the generators $e_{i}$ to a linearly independent subset of
$\Lambda^{k+1}\cc^{n}$;  applying Lemma~\ref{lem:Dibag} shows that
no element in the spanning set has any linear factor.
\end{example}

Of course, Example~\ref{exa:CrossExteriorExample} has $n=2k$ and
so does not satisfy the dimension bound suggested by Theorem~\ref{thm:HiltonMilner},
but it illustrates one difficulty in answering Question~\ref{que:ExteriorHM}. 

Difficulties also arise in attempting to generalize the shifting-based
approach of \cite{Frankl:1987,Frankl/Furedi:1986}. An intersecting
set system with no common intersection may be transformed by shifting
operations into a shifted system with the same properties. In the
situation of Question~\ref{que:ExteriorHM}, can the techniques of
Section~\ref{subsec:LimitingMatrixGp} be used to do the same?

The techniques of this paper are applicable to other intersection
problems in extremal set theory, so long as the condition corresponds
to a subvariety in $\Gr_{m}(\Lambda^{k}\cc^{n})$. For example, Seyed
Amin Seyed Fakhari has suggested {[}private communication{]} that
replacing the pairs of exterior elements in Section~\ref{subsec:ProjVarieties}
with $s$-tuples of exterior elements may yield an algebraic groups
approach to the Erd\H{o}s Matching Conjecture. 

\bibliographystyle{hamsplain}
\bibliography{2_Users_russw_Documents_Research_Master,3_Users_russw_Documents_Research_mypapers_An_al____perspective_on_Erdos-Ko-Rado_Stackexchange}

\end{document}